\theoremstyle{plain}
\newtheorem{thm}{Theorem}[section]
\newtheorem{theorem}[thm]{Theorem}
\newtheorem{corollary}[thm]{Corollary}
\newtheorem{lemma}[thm]{Lemma}
\newtheorem{remark}[thm]{Remark}
\newtheorem {conjecture}[thm]{Conjecture}
\numberwithin{equation}{section}
\title{Right unimodal and bimodal singularities in positive characteristic}      
\author{Nguyen Hong Duc}
\address{Institute of Mathematics, Vietnam Academy of Science and Technology\newline \indent 18 Hoang Quoc Viet Road, Cau Giay District, \newline \indent  10307, Hanoi.} 
\email{nhduc@math.ac.vn}
\begin{document}
\begin{abstract}
The problem of classification of real and complex singularities was initiated by Arnol'd in the sixties who classified simple, unimodal and bimodal w.r.t. right equivalence. The classification of right simple singularities in positive characteristic was achieved by Greuel and the author in 2014. In the present paper we classify right unimodal and bimodal singularities in positive characteristic by giving explicit normal forms. Moreover we completely determine all possible adjacencies of simple, unimodal and bimodal singularities. As an application we prove that, for singularities of right modality at most 2, the $\mu$-constant stratum is smooth and its dimension is equal to the right modality. In contrast to the complex analytic case, there are, for any positive characteristic, only finitely many 1-dimensional (resp. 2-dimensional) families of right class of unimodal (resp. bimodal) singularities. We show that for fixed characteristic $p>0$ of the ground field, the Milnor number of $f$ satisfies $\mu(f)\leq 4p$, if the right modality of $f$ is at most 2. 
\end{abstract}
\maketitle
\section{Introduction}
We classify hypersurface singularities $f\in K[[x_1,\ldots,x_n]]$ which are unimodal and bimodal w.r.t. right equivalence, where $K$ is an algebraically closed field of positive characteristic. That is, the singularities have modality 1 resp. 2 up to the change of coordinates (or right equivalence, see Section \ref{sec21}). The notion of modality was introduced by Arnol'd in the seventies \cite{Arn72}, \cite{Arn73}, \cite{Arn76} into singularity theory for real and complex singularities. He classified simple, unimodal and bimodal hypersurface singularities w.r.t. right equivalence. He showed that the simple singularities are exactly the $ADE$-singularities, i.e. the two infinite series $A_k, k\geq 1$, $D_k, k\geq 4$, and the three exceptional singularities $E_6, E_7, E_8$. The right simple singularities in positive characteristic were recently classified by Greuel and the author in \cite{GN14}. 

The main result of the present paper is the classification of unimodal and bimodal singularities w.r.t. right equivalence with tables of normal forms. Recall that a normal form is a {\em modular family} $F({\bf x},t)\in \mathcal{O}(T)[[{\bf x}]]$ (see $\S$\ref{sec2}), i.e. for each $t\in T$ there are only finitely many $ t'\in T$ such that $f_{t'}\sim_r f_{t}$. Notice that, if $F({\bf x},t)$ is a normal form, then $\mathrm{mod}(F({\bf x},t))\geq \dim T$ for all $t\in T$. 
 Our lists of normal forms for unimodal and bimodal singularities are given in $\S \ref{sec3}$. In contrast to the complex analytic case, there exist only finitely many $r$-dimensional normal forms for $r$-modal singularities for $r\leq 2$. Moreover, we obtain that for a singularity $f$ with modality at most 2, it Milnor number is bounded by a function of the characteristic. Precisely, we show in Corollary \ref{coro31} that 
$$\mu(f)\leq 4p.$$
As an application of the classification, we obtain that if $f$ is simple, unimodal or bimodal singularity, then its $\mu$-constant stratum is smooth. Consequently, we prove that right modality and proper modality coincide (see $\S$\ref{sec2} for definitions). We conjecture that the equality holds in general, see Conjecture \ref{conj24}. 

Section \ref{sec4} is an outline of the proofs of the main results. The proofs are organized in the form of a singularity determinator, finding for every given singularities its place in the list of $\S$ \ref{sec3}, similar to Arnold's classification in \cite{Arn76}. We present an algorithm for determining the right class of a singularity in the form of 152 theorems. The main results are proved in Section \ref{sec5}. 

Note that, for contact equivalence and for $K=\mathbb{C}$, it was proved by Giusti in \cite{Giu77} that ADE-singularities are contact simple. The classification of contact unimodal singularities was achieved by Wall in \cite{Wal83}. Greuel and Kr\"oning showed in \cite{GK90} that the contact simple singularities over a field of positive characteristic are again exactly the $ADE$-singularities or the rational double points of Artin's list \cite{Art77}.
\subsection*{Acknowledgement} 
The author would like to thank Gert-Martin Greuel for valuable discussions.
This research project was partially supported by Vietnam National Foundation for Science and Technology Development(NAFOSTED) grant 101.04-2014.23, and and the Oberwolfach Leibniz Fellows programme of the Mathematisches Forschungsinstitut Oberwolfach (Germany).
\section{Modality}\label{sec2}
Modality was introduced by Arnol'd in connection with the classification of singularities of functions under right equivalence. It has been generalized to arbitrary actions of algebraic groups by Vinberg [18]. Wall [20] described two possible generalizations for use in other classification problems in singularity theory. Both geeralization are developed in detail by Greuel and the author (\cite{GN14}) for any characteristic and it was proved that they coincide. 

\subsection{Right modality}\label{sec21}
Consider an action of algebraic group $G$ on a variety $X$ (over a given algebraically closed field $K$) and a {\em Rosenlicht stratification} $\{(X_i,p_i), i=1,\ldots, s\}$ of $X$ w.r.t. $G$. That is, a stratification $X=\cup_{i=1}^sX_i$, where the stratum $X_i$ is a locally closed $G$-invariant subvariety of $X$ such that the projection $p_i:X_i\to X_i/G$ is a geometric quotient. For each open subset $U\subset X$ the modality of $U$, $G\text{-}\mathrm{mod}(U)$, is the maximal dimension of the images of $U \cap X_i$ in $X_i/G$. The modality $G\text{-}\mathrm{mod}(x)$ of a point $x \in X$ is the minimum of $G\text{-}\mathrm{mod}(U)$ over all open neighbourhoods $U$ of $x$.

Let $K[[{\bf x}]]=K[[x_1,\ldots,x_n]]$ the formal power series ring and let the right group, $\mathcal R:=Aut(K[[{\bf x]}])$, act on $K[[{\bf x}]]$ by $(\Phi,f) \mapsto \Phi(f)$. Two elements $f,g\in K[[{\bf x}]]$ are called {\em right equivalent}, $f\sim_r g$, if they belong to the same $\mathcal{R}$-orbit, or equivalently, there exists a coordinate change $\Phi\in Aut(K[[{\bf x]}])$ such that $g=\Phi(f)$.

Recall that for $f\in\mathfrak{m}\subset K[[{\bf x}]]$, $\mu(f):=\dim K[[{\bf x}]]/j(f),\ j(f)=\langle f_{x_1}, \ldots, f_{x_n}\rangle$, denotes the {\em Milnor number} of $f$ and that $f$ is {\em isolated} if $\mu(f)<\infty$. The $k$-jet of $f$, $j^k(f)$, is the image of $f$ in the jet space $J_k:=\mathfrak{m}/\mathfrak{m}^{k+1}$. We call $f$ to be right $k$-determined if each singularity having the same $k$-jet with $f$, is right equivalent to $f$. A number $k$ is called {\em right sufficiently large} for $f$, if there exists a neighbourhood $U$ of the $j^k f$ in $J_k$ such that every $g\in K[[{\bf x}]]$ with $j^kg\in U$ is right $k$-determined. The {\em right modality} of $f$, $\mathcal R\text{-}\mathrm{mod}(f)$, is defined to be the $\mathcal R_k\text{-}\mathrm{modality}$ of $j^k f$ in $J_k$ with $k$ right sufficiently large for $f$, where $\mathcal R_k$ the $k$-jet of $\mathcal R$. A singularity $f\in K[[{\bf x}]]$ is called ({\em right}) {\em simple, uni-modal, bi-modal} and {\em $r$-modal} if its (right) modality is equal to 0,1,2 and $r$ respectively. These notions are independent of the right sufficiently large $k$.

The second description is in relation with versal or complete deformation. Let $T$ be an affine variety with its algebra of global section $\mathcal{O}(T)$. Then a family $f_{t}({\bf x}):=F({\bf x},t)\in \mathcal{O}(T)[[{\bf x}]]$ is called an {\em unfolding} (deformation with trivial section) of $f$ over a pointed space $T, t_0$ if $F({\bf x},t_0)=f\text{ and }f_{t}\in \mathfrak{m}$ for all $t\in T$. A {\em semiuniversal unfolding} is given by
$$F({\bf x},\lambda):=f({\bf x})+\sum_{i=1}^{N}\lambda_i {\bf x}^{\alpha_i},$$
with $\lambda=(\lambda_1,\ldots,\lambda_N)$ is the coordinate of $\lambda\in \mathbb{A}^N$ and $\{{\bf x}^{\alpha_1},\ldots,{\bf x}^{\alpha_N}\}$ is a basis of $\mathfrak{m}/\mathfrak{m}\cdot j(f)$. Note that from the exact sequence
$$0\rightarrow j(f)/\mathfrak{m}\cdot j(f) \rightarrow \mathfrak{m}/\mathfrak{m}\cdot j(f) \rightarrow \mathfrak{m}/j(f)\rightarrow 0$$ 
we get $N=\mu+n-1$. Since $\mathfrak{m}\cdot j(f)$ is the tangent space of the orbit of the right group $\mathcal R$ at $f$ (\cite{BGM12}), $N$ is the codimension of the orbit in $\mathfrak{m}$.

An unfolding $F({\bf x},t)$ over $T,{t}_0$ is called {\em right complete} if any unfolding $H({\bf x},s)$ over $S,s_0$ is isomorphic to a pullback of $F$ after passing to some \'etale neighbourhood of $S,s_0$, see \cite{GN14}. An important property of the complete unfoldings is that they are sufficient to determine the modality, i.e. if $F$ is a complete unfolding of $f$, then modality of $f$ w.r.t. $F$ (\cite[Def. 2.5]{GN14}) equals to modality of $f$, see \cite[Prop. 2.12(ii)]{GN14}. A semiuniversal unfolding of an isolated hypersurface singularity is right complete (see \cite{KaS72} for the analytic case and \cite{GN14} for the general case). Consequently, we may define modality of $f$ as follows: {\em ``Let $f_\lambda$ be a semiuniversal unfolding of $f$ over $\mathbb{A}^N,0$. If the set of singularities $f_\lambda\in K[[{\bf x}]]$ ($\lambda$ in some Zariski neighbourhood of $0\in \mathbb{A}^N$) falls into finitely many families of right classes, each depending on $r$ parameters (at most) then $f$ is right (resp. contact) $r$-modal (at most).''}
\begin{remark}{\rm
(1) For convergent power series over the complex numbers it does not make any difference wheter we consider the semiuniversal deformation (without section) given by the Milnor algebra $\mathbb{C}\{x_1,\ldots,x_n\}/j(f)$ or the semiuniversal deformation with section given by $\mathfrak{m}/\mathfrak{m}\cdot j(f)$. However, in positive characteristic we have to consider the latter (cf. \cite{GN14} and \ref{sec22}).

(2) The difference between the classical versal and our complete deformation is twofold. First, we consider deformations over algebraic varieties and not just of the spectrum of a complete local ring (as for versal deformations). Second, we do not require the lifting property for induced deformations over small extensions (cf. \cite[Ch.2]{GLS06}). 
}\end{remark}
\subsection{Proper modality}\label{sec22}
In \cite{Gab74} Gabrielov showed in the complex analytic case that the right modality is equal to the dimension of the $\mu$-constant stratum in a semi-universal deformation of $f$. This is not true in positive characteristic since $f=x^2+y^4\in A_3\subset K[[x,y]]$ with $\mathrm{char}(K)=3$, is unimodal, but the dimension of the stratum $\mu=3$ into the semiuniversal deformation $f+a_0+a_1y+a_2y^2$, is equal to $0$. In positive characteristic we need to consider deformations with section. Let $f_{\lambda}({\bf x}):=F({\bf x},\lambda)$ be the semiuniversal unfolding of $f$ with trivial section over affine variety $\mathbb{A}^N,0$ with $N=\mu+n-1$ as above. We define the {\em proper modality} of $f$, denoted by $\mathrm{pmod}(f)$, to be the dimension at $0$ of the $\mu$-constant stratum in $\mathbb{A}^N$:
$$\Delta_{\mu}:=\{\lambda\in \mathbb{A}^N\ |\ \mu(f_{\lambda})=\mu\}.$$ 
\begin{conjecture}\label{conj24}
$\mathrm{pmod}(f)=\mathrm{rmod}(f)$.
\end{conjecture}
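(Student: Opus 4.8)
The plan is to pass to a finite-dimensional model and read both quantities off the same slice map. Fix $k$ right sufficiently large for $f$ and work in $J_k=\mathfrak m/\mathfrak m^{k+1}$ under the jet group $\mathcal R_k$, with a Rosenlicht stratification $\{(X_i,p_i)\}$. The semiuniversal unfolding with section $F(\mathbf x,\lambda)=f+\sum_{i=1}^N\lambda_i\mathbf x^{\alpha_i}$ induces a morphism $\phi:\mathbb A^N\to J_k$, $\lambda\mapsto j^k f_\lambda$, which by construction is an immersion at $0$ whose image $S=\phi(\mathbb A^N)$ is transverse at $j^kf$ to the orbit $\mathcal R_k\cdot j^kf$: indeed $T_0\phi$ carries $T_0\mathbb A^N$ isomorphically onto a complement of the orbit tangent space $\mathfrak m\cdot j(f)$ (truncated) in $T_{j^kf}J_k$, and $\dim S=N$ equals the codimension of the orbit. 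The $\mu$-constant locus $\Sigma_\mu=\{\xi\in J_k:\mu(\xi)=\mu\}$ is a locally closed $\mathcal R_k$-invariant set with $\Delta_\mu=\phi^{-1}(\Sigma_\mu)$. Since modality is upper semicontinuous and, in the semiuniversal unfolding, $f$ is the most degenerate member (highest $\mu$), the maximum defining $\mathcal R_k\text{-}\mathrm{mod}(j^kf)$ is attained on the Rosenlicht stratum through $j^kf$, so $\mathrm{rmod}(f)=\dim_{j^kf}\Sigma_\mu/\mathcal R_k$. The goal thus becomes: the restricted map $\phi:\Delta_\mu\to\Sigma_\mu/\mathcal R_k$ has image of full local dimension and finite fibres near $0$.

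For $\mathrm{pmod}(f)\ge\mathrm{rmod}(f)$ I would use that $F$ is a right complete unfolding, so by \cite[Prop. 2.12(ii)]{GN14} the modality of $f$ equals its modality with respect to $F$. Completeness means that every right class occurring near $f$ is met by some $f_\lambda$; hence $\phi:\Delta_\mu\to\Sigma_\mu/\mathcal R_k$ is surjective onto a neighbourhood of the image of $j^kf$, which yields $\dim_0\Delta_\mu\ge\dim_{j^kf}\Sigma_\mu/\mathcal R_k=\mathrm{rmod}(f)$.

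The reverse inequality $\mathrm{pmod}(f)\le\mathrm{rmod}(f)$ is the substantial one and amounts to finiteness of the fibres of $\phi:\Delta_\mu\to\Sigma_\mu/\mathcal R_k$, i.e.\ to showing that $S$ meets each $\mathcal R_k$-orbit contained in $\Sigma_\mu$ in only finitely many points, for orbits close to that of $j^kf$. At $j^kf$ itself this is immediate in any characteristic: transversality together with the complementary-dimension equality $\dim S+\dim(\mathcal R_k\cdot j^kf)=\dim J_k$ forces the tangent space of $S\cap(\mathcal R_k\cdot j^kf)$ to vanish, so the intersection is $0$-dimensional there. To propagate this to all nearby orbits in $\Sigma_\mu$ I would invoke the openness of transversality, which requires the orbit tangent distribution $\lambda\mapsto\mathfrak m\cdot j(f_\lambda)$ to be a sub-bundle along $\Delta_\mu$, that is, the orbit dimension $\dim_K\mathfrak m/\mathfrak m\cdot j(f_\lambda)$ to be locally constant on $\Sigma_\mu$; granting this, finiteness holds near $0$ and gives $\dim_0\Delta_\mu\le\dim_{j^kf}\Sigma_\mu/\mathcal R_k=\mathrm{rmod}(f)$.

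The main obstacle is exactly this constancy, compounded by the behaviour of quotients in characteristic $p$. Via the exact sequence of $\S\ref{sec2}$, constancy of $\dim_K\mathfrak m/\mathfrak m\cdot j(f_\lambda)$ along $\mu=\mathrm{const}$ reduces to $j(f_\lambda)$ retaining $n$ minimal generators throughout $\Delta_\mu$; but in characteristic $p$ the partials $f_{\lambda,x_1},\dots,f_{\lambda,x_n}$ may acquire unexpected relations, so $\dim j(f_\lambda)/\mathfrak m\cdot j(f_\lambda)$ can drop and must be controlled stratum by stratum. Moreover $\mathcal R_k$ is non-reductive, so the existence and expected dimension of the geometric quotients underlying the Rosenlicht stratification are themselves delicate over $K$. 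Finally, the transcendental input behind Gabrielov's theorem over $\mathbb C$ — integrating the Lie-algebra action to deduce right-triviality along a $\mu$-constant family — is unavailable in characteristic $p$, so the finiteness of $S\cap\text{orbit}$ would have to be obtained purely algebraically, e.g.\ by a formal/infinitesimal right-triviality argument fed by the completeness of $F$ rather than by flows. I expect this replacement of the flow argument, made uniform over $\Sigma_\mu$ and compatible with the possible degeneration of $\dim j(f_\lambda)/\mathfrak m\cdot j(f_\lambda)$, to be the hard part, which is why the statement is recorded here only as a conjecture.
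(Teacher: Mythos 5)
You cannot be judged against ``the paper's own proof'' here, because there is none: the statement you attempted is recorded in the paper precisely as Conjecture \ref{conj24}, left open. What the paper actually establishes is the partial result of Corollary \ref{coro33} --- if $\mathrm{rmod}(f)\leq 2$ then $\mathrm{pmod}(f)=\mathrm{rmod}(f)$ --- and this is obtained by a route entirely different from yours: not by transversality in the jet space, but as a byproduct of the explicit classification, inspecting the normal forms of \S\ref{sec3} case by case and computing that the $\mu$-constant stratum in the semiuniversal unfolding is a linear space (Corollary \ref{coro32}) whose dimension equals the modality. Your proposal is therefore a strategy sketch for the general statement, and you concede at the end that the decisive step is missing; that assessment is correct, and the attempt cannot be counted as a proof.

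Beyond the gap you flag yourself (local constancy of $\dim_K \mathfrak m/\mathfrak m\cdot j(f_\lambda)$ along $\Delta_\mu$, plus an algebraic substitute for the flow argument), there is an earlier circularity that you treat as immediate: the claim that ``the maximum defining $\mathcal R_k\text{-}\mathrm{mod}(j^kf)$ is attained on the Rosenlicht stratum through $j^kf$, so $\mathrm{rmod}(f)=\dim_{j^kf}\Sigma_\mu/\mathcal R_k$.'' The modality is the minimum over neighbourhoods $U$ of the maximum, over \emph{all} strata $X_i$, of the dimension of $p_i(U\cap X_i)$; the strata realizing this maximum may well consist of less degenerate singularities with $\mu'<\mu$, and no semicontinuity argument pins the maximum to the $\mu$-constant locus. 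Over $\mathbb C$ the identification $\mathrm{rmod}(f)=\dim\Delta_\mu$ is exactly Gabrielov's theorem \cite{Gab74}, i.e.\ it \emph{is} the content of the conjecture; assuming it at the outset renders both of your inequalities vacuous. The paper's own cautionary example in \S\ref{sec22} ($x^2+y^4$ in characteristic $3$, unimodal, yet with $0$-dimensional $\mu$-constant stratum in the deformation without section) shows how fragile this identification is in characteristic $p$ and why the version with section is the nontrivial open question. Note also that in characteristic $p$ the orbit map can be inseparable, so $\mathfrak m\cdot j(f)$ (the image of the tangent map) may have dimension strictly larger than the orbit itself; your complementary-dimension bookkeeping controls only tangent spaces, which suffices at the base point $j^kf$ but aggravates the propagation problem you already identified along $\Sigma_\mu$.
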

See Corollary \ref{coro33} for a partial result of the conjecture. Namely, if $\mathrm{rmod}(f)\leq 2$ then $\mathrm{pmod}(f)=\mathrm{rmod}(f)$.

\section{Right unimodal, bimodal singularities and adjacency diagrams}\label{sec3}
In this section we present the result of our classification and adjacency diagrams of simple, unimodal and bimodal singularities. 
\subsection{Right unimodal singularities}
\begin{theorem}\label{thm21}
Let $p=\mathrm{char}(K)>2$. A hypersurface singularity $f\in\mathfrak{m}^2$ is right unimodal if and only if it is right equivalent to one of the following forms:

I. {$\bf n=1$} $(f\in K[[x]])$. The classification is given in Table \ref{table1}.

II. {$\bf n=2$} $(f\in K[[x,y]])$. The classification is given in Table \ref{table3}.

III. {$\bf n=3$} $(f\in K[[x,y,z]])$. The classification is given in Table \ref{table4}.

IV. {$\bf n>3$}. The classification is given in Table \ref{table5}.
\end{theorem}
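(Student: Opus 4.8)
The plan is to follow Arnol'd's strategy of a \emph{singularity determinator}, now adapted to positive characteristic, combining the splitting lemma with a Newton-polygon analysis and an explicit verification of modality for each candidate normal form. Since $p=\mathrm{char}(K)>2$, the integer $2$ is invertible, so the generalized Morse (splitting) lemma is available: any $f\in\mathfrak{m}^2$ is right equivalent to $g(x_1,\ldots,x_c)+x_{c+1}^2+\cdots+x_n^2$, where $c$ is the corank of $f$ (the corank of its Hessian) and $g\in\mathfrak{m}^3$. Because adjoining a nondegenerate quadratic form in fresh variables does not change the modality, $\mathcal R\text{-}\mathrm{mod}(f)=\mathcal R\text{-}\mathrm{mod}(g)$. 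This reduces Case IV ($n>3$) to the small-corank situation and shows that the whole classification is governed by $g$; it also lets me first establish that right unimodality forces $c\leq 3$, since a residual part of corank $\geq 4$ already carries modality $\geq 2$ in its leading cubic form. Thus only $c\in\{1,2,3\}$ survive, which is exactly the organization of Tables \ref{table1}, \ref{table3} and \ref{table4}, with Table \ref{table5} covering the stabilized versions for $n>3$.

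Next I would treat each corank $c\in\{1,2,3\}$ in turn. For fixed $c$ I read off the admissible weighted-homogeneous leading forms of $g$ from its Newton polygon, and then normalize the higher-order terms by successive coordinate changes ordered by increasing filtration, precisely as in \cite{Arn76}. At each stage I invoke finite determinacy, valid in positive characteristic by \cite{GN14}, to replace $g$ by one of its jets, so that every manipulation takes place in a polynomial ring. The space in which the moduli live is the semiuniversal unfolding $F({\bf x},\lambda)=f({\bf x})+\sum_i\lambda_i{\bf x}^{\alpha_i}$ with trivial section introduced in $\S\ref{sec2}$; its completeness (\cite[Prop.\ 2.12(ii)]{GN14}) guarantees that the number of moduli read off from $F$ is the genuine right modality, which is why working with $F$ suffices.

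For the converse (the \emph{if} direction) I would verify, for every entry of the four tables, that the displayed family is a \emph{modular family} over a one-dimensional base $T$: for each value of the modulus only finitely many nearby values give a right-equivalent singularity, so the family sweeps out a genuine one-parameter family of $\mathcal R$-orbits, giving modality $\geq 1$; and conversely, since the listed family is a complete family depending on a single parameter, the modality is $\leq 1$. Concretely this requires showing in each case that the remaining modulus is essential, i.e.\ cannot be removed by any admissible coordinate change, and that no second independent modulus can appear. Together the two bounds pin the modality at exactly $1$.

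The main obstacle, and the true source of all the characteristic-$p$ phenomena, is the failure of the term-eliminating coordinate changes of the complex theory whenever $p$ divides the relevant exponents. Completing a power $x^k+c\,x^{k-1}y+\cdots$ into a pure $k$-th power requires dividing by $k$; when $p\mid k$ this step is unavailable, the offending term survives, and it can become a genuine modulus. The delicate part is to track exactly which of these reductions remain possible, to prove that what is left is irreducible, and to show that it accounts for precisely one parameter rather than zero or two. It is here that the determinacy bounds, the normal forms, and indeed the very list of unimodal types diverge from the complex case, and it is this branching on divisibility by $p$ that forces the whole argument to be carried out as a case-by-case determinator.
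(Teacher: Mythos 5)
Your overall architecture coincides with the paper's: the splitting lemma (valid since $p>2$) reduces everything to the corank-$\leq 3$ part $g\in\mathfrak{m}^3$ with $\mathcal R\text{-}\mathrm{mod}(f)=\mathcal R\text{-}\mathrm{mod}(g)$, higher coranks are excluded outright, and the classification of $g$ proceeds as an Arnol'd-style determinator ordered by the Newton polygon, with finite determinacy in positive characteristic from \cite{GN14} justifying the jet-by-jet normalization; the characteristic-$p$ branching on divisibility of exponents that you highlight is indeed exactly where the paper's 152-theorem determinator in Section \ref{sec4} departs from \cite{Arn76}. Up to this point you are on the same route as Sections \ref{sec4}--\ref{sec5}.

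There is, however, a genuine gap in your ``if'' direction. You write that ``since the listed family is a complete family depending on a single parameter, the modality is $\leq 1$.'' This inference does not go through: a one-parameter normal-form family such as $x^3+y^7+axy^5$ is never a complete unfolding of its members (a complete unfolding needs $\mu+n-1$ parameters, via the semiuniversal unfolding), and mere membership of $f$ in a one-parameter family gives no upper bound on $\mathrm{rmod}(f)$, because nearby points of $j^kf$ in the jet space $J_k$ need not lie in that family at all. The paper's actual mechanism is Lemma \ref{lm5.5.1}: one must show that $f$ \emph{deforms only into} finitely many modular families each of dimension $\leq 1$, which is the content of the adjacency analysis (Theorem \ref{thm35}, itself proved by rerunning the determinator on all deformations of $f$); your plan never performs this step for the listed normal forms. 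Relatedly, the exhaustiveness (``only if'') direction requires genuine lower bounds $\mathrm{rmod}(f)\geq 3$ on every stratum the determinator discards (for instance $f\in\langle x,y^4\rangle^3$, or $j_{x^4,y^7}f=x^4$), and the paper obtains these from a dimension count for auxiliary algebraic group actions (Lemmas \ref{lm5.3.1} and \ref{lm5.3.2}), not from the Newton-polygon bookkeeping alone; you correctly identify this as ``the delicate part'' but supply no mechanism for it, so as written the determinator terminates without proof on both the inclusion and the exclusion side.
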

\begin{theorem}\label{thm22}
Let $p=\mathrm{char}(K)=2$. A hypersurface singularity $f\in\mathfrak{m}^2$ is right unimodal if and only if $n$ is odd and $f$ is right equivalent to one of singularities in the Table \ref{table5.1}.
\end{theorem}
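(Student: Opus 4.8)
```latex
The plan is to prove Theorem \ref{thm22} by reducing the characteristic-2 classification to a normal-form analysis analogous to the one used in Theorem \ref{thm21}, but paying special attention to the pathologies of quadratic forms and the Morse lemma in characteristic 2. First I would recall the splitting principle (generalized Morse lemma): any $f\in\mathfrak{m}^2$ can be written, after a right coordinate change, as $g(x_1,\ldots,x_c)+q(x_{c+1},\ldots,x_n)$ where $c$ is the corank (the dimension of $\mathfrak{m}^2/(\mathfrak{m}^3+\mathfrak{m}\cdot j(f))$, i.e. the corank of the Hessian understood correctly in characteristic 2) and $q$ is a nondegenerate quadratic form in the remaining variables. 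In characteristic 2 the key subtlety is that a nondegenerate quadratic form on an even-dimensional space over $K$ is equivalent to a sum of hyperbolic planes $\sum x_iy_i$, whereas on an odd-dimensional space a nondegenerate quadratic form must contain a square term; this parity constraint is precisely what forces $n$ to be odd in the statement. I would make this parity bookkeeping explicit: the residual singularity $g$ lives in $c$ variables and the Milnor-number and modality contributions of the quadratic part are controlled by $\dim$ of the hyperbolic part versus the square-containing part.

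Second, having reduced to the corank form, I would invoke the semiuniversal-unfolding description of modality from $\S$\ref{sec2}: $f$ is unimodal exactly when the family of singularities in its semiuniversal unfolding with trivial section falls into finitely many one-parameter families of right classes. Since right modality is unchanged under stable equivalence (adding nondegenerate quadratic forms in new variables, which is the content of the splitting lemma together with the completeness of the semiuniversal unfolding), the unimodal singularities in $n$ variables are obtained from the unimodal singularities in the minimal number of variables by adding an appropriate nondegenerate quadratic form. Thus I would first classify the essential (corank $\geq 1$) unimodal germs in characteristic 2, and then reattach the quadratic tail, at which point the odd-parity condition emerges because only odd $n$ admits the correct nondegenerate form to preserve both the isolatedness ($\mu<\infty$) and the unimodality count.

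Third, for the corank part itself I would run the singularity-determinator strategy advertised in the introduction: starting from the $k$-jet for $k$ right sufficiently large, use weighted homogeneous filtration and finite determinacy to list candidate normal forms, then compute Milnor numbers and check via the $\mathcal R_k$-action on $J_k$ that each candidate genuinely has one modulus and no more. The main obstacle I expect is the failure of finite determinacy estimates and of the usual Mather--Yau/infinitesimal-stability machinery in characteristic 2: the bound relating $\mu$ and the determinacy degree degrades, and certain coordinate changes available in characteristic $\neq 2$ (completing the square, clearing cross terms via Hessian diagonalization) are simply unavailable. Consequently I would need to track which $p=2$ germs remain genuinely unimodal rather than collapsing to higher modality, and to verify that the list in Table \ref{table5.1} is both exhaustive and irredundant. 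The verification that no omitted germ of corank $\geq 1$ is unimodal is the delicate direction; the ``if'' direction (each listed form has modality exactly 1) reduces to a finite, if laborious, check of the dimension of the image of the $\mu$-constant stratum, which by the completeness of the semiuniversal unfolding computes the modality directly.
```
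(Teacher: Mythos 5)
Your overall route --- splitting lemma adapted to characteristic $2$, a determinator-style classification of the essential part, and modality controlled via complete (semiuniversal) unfoldings --- is the same as the paper's, which settles $p=2$ by Theorems {\bf 122}--{\bf 136} together with the splitting lemma of \cite{GN14} and the univariate classification of \cite{Ng14}. However, two of your central steps have genuine gaps. First, the parity mechanism is misidentified, and the case analysis that constitutes the actual proof is absent. You assert that an odd-dimensional nondegenerate quadratic form in characteristic $2$ must contain a square term and that ``this parity constraint is precisely what forces $n$ to be odd.'' But a square term $x_i^2$ has vanishing differential in characteristic $2$, so a quadratic form containing one has $\mu=\infty$ by itself; hence the form $Q$ split off by the splitting lemma must be purely hyperbolic and occupies an \emph{even} number of variables, and the essential part $g$ lives in $c\equiv n \pmod 2$ variables, inside which square terms $a_ix_i^2$ survive as non-removable terms whose coefficients are moduli. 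The correct dichotomy is then a count of these moduli: $c=0$ (even $n$) gives the simple $A_1$; $c=1$ (odd $n$) gives exactly one quadratic modulus, and among $ax_1^2+x_1^3$ ($\mu=2$, Table \ref{table5.1}) versus $a_1x_1^2+a_2x_1^4+x_1^5$ and deeper germs ($\mu\geq 4$, at least two moduli, Table \ref{table8.1}) only $A_2$ is unimodal; $c=2$ (even $n$) forces two moduli $a_1x_1^2+a_2x_2^2$, so $\mathrm{rmod}\geq 2$ for every non-simple even-$n$ germ (Theorems {\bf 124}--{\bf 135}, Table \ref{table9}); and $c\geq 3$ gives $\mathrm{rmod}\geq 4$ (Theorem {\bf 136}). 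So odd $n$ is forced not because odd $n$ ``admits the correct nondegenerate form'' --- it is even $n$ that carries the perfect hyperbolic form, yielding the simple $A_1$ --- but by this count, which your plan never carries out; without it the ``only if'' direction is unproved.

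Second, your proposed verification of the ``if'' direction is circular: you want to read off modality from ``the dimension of the image of the $\mu$-constant stratum,'' but in positive characteristic the identity $\mathrm{rmod}=\mathrm{pmod}$ is exactly Conjecture \ref{conj24}, which the paper obtains for $\mathrm{rmod}\leq 2$ only \emph{after} the classification (Corollary \ref{coro33}); Gabrielov's theorem fails here for deformations without section, as the paper's example $x^2+y^4$ with $p=3$ shows. The legitimate tool, as in Lemma \ref{lm5.5.1}, is the count of families of right classes in a complete unfolding, and for $A_2$ this is a short computation you should actually include: with $f=ax^2+x^3$ one has $j(f)=\langle x^2\rangle$, the semiuniversal unfolding $f+\lambda_1x+\lambda_2x^2$ has members that are nonsingular for $\lambda_1\neq 0$ and literally of the form $(a+\lambda_2)x^2+x^3$ for $\lambda_1=0$, a single one-parameter family, whence $\mathrm{rmod}(f)\leq 1$; conversely, comparing $3$-jets under a coordinate change $x\mapsto cx+\cdots$ gives $a\mapsto ac^2$ with $c^3=1$ (the cross terms vanish since $p=2$), so the family is modular and $a$ is genuinely one modulus, giving $\mathrm{rmod}(f)=1$.
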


\begin{table}[h]
\begin{tabular}[20pt]{|c|l l l|}
\hline 
Name& Normal form&Conditions&$\mu$ \\
\hline
$\mathrm{A}_{k}$&$x^p+a x^{k+1}$& $p\leq k\leq 2p-2$& $k$\\
\hline
\end{tabular}
\newline
\caption{}\label{table1}
\end{table}


\begin{table}[h]
\begin{tabular}[20pt]{|c|l l l|}
\hline 
Name& Normal form&Conditions&$\mu$ \\
\hline
$\mathrm{A}_{k}$&$x^2+ay^{p}+y^{k+1}$&$p\leq k\leq 2p-2$&$k$\\
\hline
$\mathrm{D}_{p}$&$x^2y+y^{p-1}$&$3< p$&$p$\\
\hline
$\mathrm{D}_{k}$&$x^2y+ay^{p}+y^{k-1}$&$3\leq p< k-1\leq 2p-2$&$k$\\
\hline
$\mathrm{E}_{12}$& $x^3+y^7+axy^5$&$7<p$&$12$\\
\hline
$\mathrm{E}_{13}$&  $x^3+xy^5+ay^8$&$7<p$&$13$\\
\hline
$\mathrm{E}_{14}$& $x^3+y^8+axy^6$&$7<p$&$14$\\
\hline
$\mathrm{J}_{10}=\mathrm{J}_{2,0}$& $x^3+y^6+ax^2y^2$&$5<p$&$10$\\
\hline
$\mathrm{J}_{2,q}$& $x^3+ax^2y^2+y^{6+q}$&$6<6+q<p$&$q+10$\\
\hline
$\mathrm{W}_{12}$& $x^4+y^5+ax^2y^3$&$p>5$&$12$\\
\hline
$\mathrm{W}_{13}$& $x^4+xy^4+ay^6$&$p>5$&$13$\\
\hline
$\mathrm{X}_{9}=\mathrm{X}_{1,0}$& $x^4+y^4+ax^2y^2$&$3<p$&$9$\\
\hline
$\mathrm{X}_{1,q}$& $x^4+x^2y^2+ay^{4+q}$&$4<4+q<p$&$q+9$\\
\hline
$\mathrm{Y}_{r,s}$& $x^{4+r}+ax^2y^2+y^{4+s}$&$4<4+r\leq 4+s<p$&$9+r+s$\\
\hline
$\mathrm{Z}_{11}$& $x^3y+y^5+axy^4$&$5<p$&$11$\\
\hline
$\mathrm{Z}_{12}$& $x^3y+xy^4+ax^2y^3$&$5<p$&$12$\\
\hline
$\mathrm{Z}_{13}$& $x^3y+y^6+axy^5$&$5<p$&$13$\\
\hline
\end{tabular}
\newline
\caption{}\label{table3}
\end{table}


\begin{table}[ht]
\begin{tabular}[20pt]{|c|l l l|}
\hline 
Name& Normal form&Conditions&$\mu$ \\
\hline
&$g(x,y)+z^2$&$g$ one of the series in Table \ref{table3}&$\mu(g)$\\
\hline
$\mathrm{P}_8=\mathrm{T}_{3,3,3}$&$x^3+y^{3}+z^{3}+axyz$&$3<p$&$8$\\
\hline
$\mathrm{Q}_{10}$&$x^3+y^{4}+yz^{2}+axy^3$&$3<p$&$10$\\
\hline
$\mathrm{Q}_{11}$&$x^3+yz^{2}+xz^{3}+az^5$&$3<p$&$11$\\
\hline
$\mathrm{Q}_{12}$&$x^3+y^{5}+yz^{2}+axy^4$&$5<p$&$12$\\
\hline
$\mathrm{S}_{11}$&$x^4+y^{2}z+xz^{2}+ax^3z$&$3<p$&$11$\\
\hline
$\mathrm{S}_{12}$&$x^2y+y^{2}z+xz^{3}+az^5$&$3<p$&$12$\\
\hline
$\mathrm{T}_{r,s,t}$&$x^r+y^{s}+z^{t}+axyz$&$3\leq r\leq s\leq t<p,\frac{1}{r}+\frac{1}{s}+\frac{1}{t}<1$,&$r+s+t-1$\\
\hline
$\mathrm{U}_{12}$&$x^3+y^{3}+z^{4}+axyz^2$&$3<p$&$12$\\
\hline
\end{tabular}
\newline
\caption{}\label{table4}
\end{table}


\begin{table}[ht]
\begin{tabular}[20pt]{|l| l|}
\hline 
\multicolumn{2}{|l|}{Normal form}\\
\hline 
$g(x_1,x_2,x_3)+x_4^2+\ldots+x_n^2$& $g$ is one of the singularities in Table \ref{table4}\\
\hline
\end{tabular}
\newline
\caption{}\label{table5}
\end{table}


\begin{table}[ht]
\begin{tabular}[20pt]{|c|l l l|}
\hline 
Name& Normal form&Conditions&$\mu$ \\
\hline
$\mathrm{A}_{2}$&$ax_1^2+ x_1^3+x_2x_3+\ldots+x_{n-1}x_n$&$a\in K$&$2$ \\
\hline
\end{tabular}
\newline
\caption{}\label{table5.1}
\end{table}
\newpage
\subsection{Right bimodal singularities}
\begin{theorem}\label{thm23}
Let $p=\mathrm{char}(K)>2$. A hypersurface singularity $f\in\mathfrak{m}^2$ is right bimodal if and only if it is right equivalent to one of the following forms

I. {$\bf n=1$} $(f\in K[[x]])$. The list is given in Table \ref{table5.2}.

II. {$\bf n=2$} $(f\in K[[x,y]])$. The list is given in Table \ref{table6}.

III. {$\bf n=3$} $(f\in K[[x,y,z]])$. The list is given in Table \ref{table7}.

IV. {$\bf n>3$}. The list is given in Table \ref{table8}.
\end{theorem}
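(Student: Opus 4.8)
The plan is to organize the argument as a singularity determinator that sends each bimodal germ to its place in Tables~\ref{table5.2}--\ref{table8}, following Arnol'd's strategy but with the coordinate changes available in characteristic $p>2$. First I would apply the splitting lemma: since $p\neq 2$, a germ $f\in\mathfrak m^2$ whose Hessian has corank $c$ is right equivalent to $g(x_1,\dots,x_c)+x_{c+1}^2+\cdots+x_n^2$ with $g\in\mathfrak m^3$, so the whole problem reduces to classifying the residual corank part $g$. To make this a finite problem I would first show that a bimodal germ has corank $c\le 3$, estimating the $\mathcal R_3$-modality of the cubic $3$-jet in $J_3$ and observing that $c\ge 4$ already forces modality $\ge 3$. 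This reduces case~IV to the corank-$\le 3$ forms of case~III (Table~\ref{table7}) with squares added, and cases~III/II/I to the explicit residual germs $g$.

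Next I would stratify the corank-$\le 3$ germs by their weighted (Newton) filtration. Because $\mu(f)<\infty$ and $p>2$, finite right determinacy lets me replace $g$ by a polynomial and record its leading quasihomogeneous part $g_0$ relative to a weight system $w$. The admissible $g_0$ are precisely those for which the number of moduli carried by the unfolding is $2$; concretely I would enumerate the weight systems and principal parts for which $\dim\bigl(\mathfrak m/\mathfrak m\cdot j(g_0)\bigr)$, corrected by the $\mathcal R$-orbit dimension along the filtration, equals~$2$. This yields the finite collection of series and exceptional germs ($\mathrm E$, $\mathrm J$, $\mathrm Z$, $\mathrm W$, $\mathrm Q$, $\mathrm S$, $\mathrm U$, $\ldots$) listed in the tables. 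For each $g_0$ I would then run a complete-transversal reduction, using quasihomogeneous coordinate changes (the Euler vector field) to remove every monomial above the filtration level except those retained as the modular parameters~$a$, producing in each case a modular family in the sense of $\S\ref{sec2}$.

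The verification splits into the two implications. For the ``if'' direction each tabulated family $F(\mathbf x,a)$ must be shown to have modality exactly~$2$: I would compute $\mathrm{mod}(F)$ through its semiuniversal unfolding, which is right complete by the result cited in $\S\ref{sec2}$ (\cite{KaS72}, \cite{GN14}), check that the corresponding $\mu$-constant stratum is two-dimensional, and confirm the listed Milnor numbers and determinacy bounds. For the ``only if'' direction I would show the enumeration is exhaustive and that distinct families are inequivalent; here the adjacency diagrams certify that any germ which is neither simple (\cite{GN14}) nor unimodal (Theorems~\ref{thm21} and~\ref{thm22}) but has modality~$2$ must lie in one of the listed strata.

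The hard part will be controlling the coordinate changes when $p$ divides a relevant exponent. Arnol'd's complex argument freely divides by integer exponents when integrating the Euler field and running the spectral-sequence/homotopy reduction, but these steps fail modulo $p$; this is exactly what forces the arithmetic side conditions in the tables (such as $3\le r\le s\le t<p$ with $\tfrac1r+\tfrac1s+\tfrac1t<1$, or the ranges $p\le k\le 2p-2$) and, ultimately, the bound $\mu(f)\le 4p$ of Corollary~\ref{coro31}. Deciding case by case when a monomial may be eliminated, when it must survive as a modulus, and when a genuinely new exceptional normal form appears is the central technical burden, and is what makes the determinator expand into the many individual theorems announced in $\S\ref{sec4}$.
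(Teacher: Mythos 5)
Your overall scheme---splitting lemma, reduction to corank $\le 3$, stratification by quasihomogeneous leading parts, complete-transversal reductions to normal forms, and verification of modality through right complete unfoldings---is essentially the route the paper takes with its $152$-theorem determinator. But two steps in your verification plan are genuinely flawed. First, you propose to certify that each tabulated family has modality exactly $2$ by checking that ``the corresponding $\mu$-constant stratum is two-dimensional.'' In positive characteristic this is circular: the equality of right modality with the dimension of the $\mu$-constant stratum (Gabrielov's theorem over $\mathbb C$) is precisely what the paper states as Conjecture \ref{conj24}, and it is obtained only \emph{a posteriori}, as Corollary \ref{coro33}, once the classification is complete. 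The paper instead bounds the modality from above by Lemma \ref{lm5.5.1}: one shows, via the adjacency statement (Theorem \ref{thm35}), that each normal form deforms only into finitely many \emph{modular} families of dimension $\le 2$, and modularity of the tabulated families then gives the matching lower bound $\mathrm{mod}(F)\ge \dim T$.

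Second, your ``only if'' direction has no working mechanism for the exclusion bounds. Adjacency diagrams do not certify exhaustiveness, and parameter counting in a normal form only bounds the modality of germs \emph{inside} a modular family; to rule out everything else one must prove $\mathrm{rmod}(f)\ge 3$ for each terminal branch of the determinator (e.g.\ $\mu\ge 3p$ in corank $\le 1$, or $j_{x^3,y^{11}}f=x^3$). The paper does this with a dedicated tool, Lemma \ref{lm5.3.1}: an equivariant comparison $h^{-1}(G\cdot h(y))\subset h'^{-1}(G'\cdot h'(y))$ against an explicitly constructed finite-dimensional action with $\dim X'-\dim G'\ge 3$, applied through statements such as Lemma \ref{lm5.3.2} ($f\in\langle x,y^{3+l}\rangle^3$ or $f\in\langle x^2,y^{3+l}\rangle^2$ implies $\mathrm{rmod}(f)\ge 2+l$). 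Your proposal acknowledges the ``central technical burden'' of deciding which monomials survive, but without a lower-bound device of this kind the characteristic-$p$ side conditions in the tables (e.g.\ $2p\le k\le 3p-2$, $3\le r\le s<p<t<2p$) cannot be shown sharp, and the classification remains a list of candidates rather than a theorem.
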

\begin{theorem}\label{thm24}
Let $p=\mathrm{char}(K)=2$. A hypersurface singularity $f\in\mathfrak{m}^2$ is right bimodal if and only if it is right equivalent to one of the following forms

I. {$\bf n$ odd}: The list is given in the Table \ref{table8.1}.

II. {$\bf n$ even}: The list is given in the Table \ref{table9}.
\end{theorem}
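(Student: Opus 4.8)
The plan is to prove both directions by the method used for the other characteristics: a characteristic-$2$ splitting lemma to cut down the number of variables, a finiteness bound to restrict the analysis to a bounded jet, a determinator that reduces an arbitrary bimodal $f$ to one of the listed forms, and finally a direct modality computation for each normal form via its semiuniversal unfolding. The first ingredient I would set up is the finiteness bound. By Corollary \ref{coro31}, any $f$ with $\mathcal R\text{-}\mathrm{mod}(f)\le 2$ satisfies $\mu(f)\le 4p=8$. Thus a bimodal $f$ is an isolated singularity with $\mu(f)\le 8$, so it is right $k$-determined for a bounded $k$ and only finitely many jets have to be examined; in particular only finitely many families can occur, which is already the qualitative shape of the statement.

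Next I would explain the dichotomy between $n$ odd and $n$ even. In characteristic $2$ the relevant nondegenerate quadratic part is built from hyperbolic pairs $x_ix_j$ rather than squares, since $\partial_x(x^2)=0$ makes a square invisible to the Jacobian ideal. The associated bilinear form $b(u,v)=q(u+v)-q(u)-q(v)$ of $q=j^2f$ is alternating, hence of even rank $2m$; applying the characteristic-$2$ splitting lemma splits off a sum of hyperbolic pairs $x_{2i-1}x_{2i}$ and leaves a residual singularity in the corank $c=n-2m$ variables. Since $2m$ is even, $c\equiv n\pmod 2$, so $n$ even (resp. odd) corresponds exactly to residual even (resp. odd) corank, which is precisely why the two cases require the separate Tables \ref{table9} and \ref{table8.1}; the odd case should moreover be compatible with the unimodal picture of Theorem \ref{thm22}.

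With these reductions in place, the two implications proceed as follows. For completeness (the ``only if'' direction), I would run the determinator: starting from the residual jet and using the bound $\mu\le 8$ to terminate, perform a finite sequence of admissible (characteristic-$2$) coordinate changes on the corank variables, organized by Newton data, to bring $f$ into one of the tabulated forms; here one tracks which monomials survive modulo $j(f)$ and chooses substitutions that are compatible with the vanishing of the derivatives of squares. For the converse and the modality count, I would verify for each normal form that its Milnor number matches the table, that it is right $k$-determined for the appropriate $k$, and that it constitutes a modular family with a two-dimensional base; then by the remark that a normal form $F(\mathbf{x},t)$ satisfies $\mathrm{mod}(F)\ge\dim T$, together with completeness of the semiuniversal unfolding \cite{GN14} (whose strata near $f$ are swept out by the listed families, giving $\mathrm{mod}\le 2$), the modality is exactly $2$.

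The hard part will be the completeness direction together with the proof that the continuous parameter $a$ is a genuine modulus. In characteristic $2$ the usual normalizations — completing the square, Tschirnhaus-type eliminations, and Morse normalization of quadratic forms — are unavailable, so the determinator must be carried out case by case with ad hoc admissible substitutions for each jet. Moreover, showing that $a$ cannot be absorbed by a coordinate change, so that the family is genuinely modular and the modality is at least (and not merely at most) $2$, requires exhibiting a right invariant that separates the orbits for distinct generic values of $a$, rather than the simple parameter-count that suffices in characteristic $0$.
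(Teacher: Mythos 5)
Your overall scheme matches the paper's: split off hyperbolic pairs $x_ix_j$ in characteristic $2$ (the Hessian is alternating, so its corank has the parity of $n$, which is exactly why Tables \ref{table8.1} and \ref{table9} separate by parity), run a determinator on the residual variables, and compute the modality of each normal form via completeness of the semiuniversal unfolding; this is precisely the paper's route through the splitting lemma of \cite{GN14}, the characteristic-$2$ determinator (Theorems \textbf{122}--\textbf{136}), and Lemma \ref{lm5.5.1}. However, your very first ingredient is circular. Corollary \ref{coro31} is derived in the paper \emph{from} the classification theorems (``The following corollaries follow from these theorems and concrete calculations''), so you cannot feed $\mu(f)\le 4p=8$ into a proof of Theorem \ref{thm24}. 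You offer no independent argument for this bound, and none is cheap: a priori nothing rules out a bimodal singularity of large Milnor number in characteristic $2$, and excluding that is the substantive content of the ``only if'' direction. As written, your determinator has no non-circular termination criterion.

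The paper terminates the case analysis instead by direct lower-bound theorems at the border jets: Theorem \textbf{123} ($\mathrm{crk}(f)\le 1\Rightarrow f\in A_k$, $k\le 5$, via \cite[Thm 2.11]{Ng14}), Theorems \textbf{130}, \textbf{133}, \textbf{135} (the jets $ax^2+by^2+x^2y$, $ax^2+by^2+x^3$, $ax^2+by^2$ force $\mathrm{rmod}(f)\ge 3$ resp.\ $\ge 4$), and Theorem \textbf{136} ($\mathrm{crk}(f)\ge 3\Rightarrow \mathrm{rmod}(f)\ge 4$). These are proved not by exhibiting a separating right invariant, as you propose for the modularity question --- a route that would be genuinely difficult here, since the moduli $a_1,a_2$ are coefficients of squares, which are invisible to $j(f)$ and hence to $\mu$ --- but by the dimension estimate of Lemma \ref{lm5.3.1}: one maps the relevant subspace of the jet space to an auxiliary small group action and bounds the modality below by $\dim X'-\dim G'$. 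If you replace your appeal to Corollary \ref{coro31} by such dimension estimates for the finitely many residual jets (and check modularity of the listed families by the same kind of computation, as the paper does), your outline becomes a correct proof; the bound $\mu(f)\le 4p$ then comes out as a corollary of the completed classification, not as an input to it.
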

\begin{table}[ht]
\begin{tabular}[20pt]{|c|l l l|}
\hline 
Name& Normal form&Conditions&$\mu$ \\
\hline
$\mathrm{A}_{k}$&$a_1x^p+ a_2x^{2p}+ x^{k+1}$& $2p\leq k\leq 3p-2$&$k$\\
\hline
\end{tabular}
\newline
\caption{}\label{table5.2}
\end{table}

 
\begin{table}[ht]
\begin{tabular}[20pt]{|c|l l l|}
\hline 
Name& Normal form (${\bf a}=a_0+a_1y$) & Conditions & $\mu$ \\
\hline
$\mathrm{A}_{k}$&$x^2+a_1y^p+ a_2y^{2p}+ y^{k+1}$&$2p\leq k\leq 3p-2$& $k$\\
\hline
$\mathrm{D}_{2p}$&$x^2y+ay^p+y^{2p-1}$&$3\leq p$& $2p$\\
\hline
$\mathrm{D}_{k}$&$x^2y+a_1y^p+ a_2y^{2p}+y^{k-1}$&$2p< k-1\leq 3p-1$& $k$\\
\hline
$\mathrm{E}_{12}$& $x^3+ay^5+y^7+bxy^5$&$p=5$& $12$\\
\hline
$\mathrm{E}_{13}$& $x^3+xy^5+{\bf a}y^7$&$p=7$& $13$\\
\hline
$\mathrm{E}_{14}$& $x^3+y^{8}+ay^7+bxy^6$&$p=7$& $14$\\
\hline
$\mathrm{E}_{18}$& $x^3+y^{10}+{\bf a}xy^7$&$7<p$& $18$\\
\hline
$\mathrm{E}_{19}$&  $x^3+xy^7+{\bf a}y^{11}$&$7<p$& $19$\\
\hline
$\mathrm{E}_{20}$& $x^3+y^{11}+{\bf a}xy^8$&$11<p$& $20$\\
\hline
$\mathrm{J}_{2,0}$& $x^3+bx^2y^2+y^6+ay^5$&$4b^3+27\neq 0, p=5$& $10$\\
\hline
$\mathrm{J}_{2,q}$& $x^3+x^2y^2+ay^p+by^{6+q}$&$p<6+q<2p,b\neq 0, p\geq 5$& $q+10$\\
\hline
$\mathrm{J}_{3,0}$& $x^3+bx^2y^3+cxy^7+y^9$&$4b^3+27\neq 0, 7<p$& $16$\\
\hline
$\mathrm{J}_{3,q}$& $x^3+x^2y^3+{\bf a}y^{9+q}$&$a_0\neq 0, 9<9+q<p$& $q+16$\\
\hline
$\mathrm{W}_{17}$& $x^4+xy^{5}+{\bf a}y^7$&$7<p$& $17$\\
\hline
$\mathrm{W}_{18}$&  $x^4+y^7+{\bf a}x^2y^{4}$&$7<p$& $18$\\
\hline
$\mathrm{W}_{1,0}$& $x^4+{\bf a}x^2y^{3}+y^6$&$a_0^2\neq 4, 5<p$& $15$\\
\hline
$\mathrm{W}_{1,q}$& $x^4+x^2y^{3}+{\bf a}y^{6+q}$&$a_0\neq 0, 7\leq 6+q<p$& $q+15$\\
\hline
$\mathrm{W}^{\sharp}_{1,2q-1}$& $(x^2+y^{3})^2+{\bf a}xy^{4+q}$&$a_0\neq 0, 5\leq 4+q<p$& $2q+14$\\
\hline
$\mathrm{W}^{\sharp}_{1,2q}$& $(x^2+y^{3})^2+{\bf a}x^2y^{3+q}$&$a_0\neq 0, 4\leq 3+q<p>5$& $2q+15$\\
\hline
$\mathrm{Z}_{12}$& $x^3y+xy^4+ay^5+bx^2y^{3}$&$p=5$& $12$\\
\hline
$\mathrm{Z}_{13}$& $x^3y+y^6+ ay^{5}+bxy^5$&$p=5$& $13$\\
\hline
$\mathrm{Z}_{17}$& $x^3y+{\bf a}xy^{6}+y^8$&$7<p$& $17$\\
\hline
$\mathrm{Z}_{18}$& $x^3y+xy^6+{\bf a}y^9$&$7<p$& $18$\\
\hline
$\mathrm{Z}_{19}$& $x^3y+y^9+{\bf a}xy^7$&$7<p$& $19$\\
\hline
$\mathrm{Z}_{1,0}$& $x^3y+bx^2y^{3}+cxy^6+y^7$&$4b^3+27\neq 0, 7<p$& $15$\\
\hline
$\mathrm{Z}_{1,q}$& $x^3y+x^2y^{3}+{\bf a}y^{7+q}$&$a_0\neq 0, 7<7+q<p$& $q+15$\\
\hline
\end{tabular}
\newline
\caption{}\label{table6}
\end{table}
 
\begin{table}[ht]
\begin{tabular}[20pt]{|c|l l l|}
\hline 
Name& Normal form (${\bf a}=a_0+a_1y$) & Conditions & $\mu$ \\
\hline
&$g(x,y)+z^2$&$g$ one of the series in Table \ref{table6}& $\mu(g)$\\
\hline
$\mathrm{Q}_{16}$&$x^3+yz^{2}+y^7+{\bf a}xy^5$&$7<p$& $16$\\
\hline
$\mathrm{Q}_{17}$&$x^3+yz^{2}+y^7+{\bf a}y^8$&$7<p$& $17$\\
\hline
$\mathrm{Q}_{18}$&$x^3+yz^{2}+y^8+{\bf a}xy^6$&$7<p$& $18$\\
\hline
$\mathrm{Q}_{2,0}$&$x^3+yz^{2}+{\bf a}x^2y^2+xy^4$&$a_0^2\neq 4,3<p$ & $14$\\
\hline
$\mathrm{Q}_{2,q}$&$x^3+yz^{2}+x^2y^2+{\bf a}y^{6+q}$&$a_0\neq 0,7\leq 6+q<p$ & $q+14$\\
\hline
$\mathrm{S}_{16}$&$x^2z+yz^{2}+xy^{4}+{\bf a}y^6$&$5<p$ & $16$\\
\hline
$\mathrm{S}_{17}$&$x^2z+yz^{2}+y^{6}+{\bf a}zy^4$&$5<p$& $17$\\
\hline
$\mathrm{S}_{1,0}$&$x^2y+yz^{2}+y^5+{\bf a}zy^3$&$a_0^2\neq 4,3<p$& $14$\\
\hline
$\mathrm{S}_{1,q}$&$x^2y+yz^{2}+x^2y^2+{\bf a}y^{5+q}$&$a_0\neq 0,5<5+q<p$& $q+14$\\
\hline
$\mathrm{S}^{\sharp}_{1,2q-1}$&$x^2y+yz^{2}+zy^3+{\bf a}xy^{3+q}$&$a_0\neq 0,3<3+q<p$& $2q+13$\\
\hline
$\mathrm{S}^{\sharp}_{1,2q}$&$x^2y+yz^{2}+zy^3+{\bf a}x^2y^{2+q}$&$a_0\neq 0,3\leq 2+q<p$& $2q+14$\\
\hline
$\mathrm{T}_{r,s,t}$&$x^r+y^{s}+z^{t}+axyz+bz^{p}$&$3\leq r\leq s<p< t<2p$& $r+s+t-1$\\
\hline
$\mathrm{U}_{16}$&$x^3+xz^2+y^{5}+{\bf a}x^2y^2$&$5<p$& $16$\\
\hline
$\mathrm{U}_{1,0}$&$x^3+xz^2+xy^{3}+{\bf a}y^3z$&$a_0(a_0^2+1)\neq 0,5<p$& $14$\\
\hline
$\mathrm{U}_{1,2q-1}$&$x^3+xz^2+xy^{3}+{\bf a}y^{1+q}z^2$&$a_0\neq 0,2\leq 1+q<p>3$& $2q+13$\\
\hline
$\mathrm{U}_{1,2q}$&$x^3+xz^2+xy^{3}+{\bf a}y^{3+q}z$&$a_0\neq 0,3< 3+q<p$& $2q+14$\\
\hline
\end{tabular}
\newline
\caption{}\label{table7}
\end{table}


\begin{table}[ht]
\begin{tabular}[20pt]{|l| l|}
\hline 
\multicolumn{2}{|l|}{Normal form}\\
\hline 
$g(x_1,x_2,x_3)+x_4^2+\ldots+x_n^2$& $g$ is one of the singularities in Table \ref{table7} \\
\hline
\end{tabular}
\newline
\caption{}\label{table8}
\end{table}
\newpage


\begin{table}[ht]
\begin{tabular}[20pt]{|c|l l|}
\hline 
Name& \multicolumn{2}{l|}{Normal form}\\
\hline
$\mathrm{A}_{4}$&$a_1x_1^2+ a_2x_1^4+x_1^5+x_2x_3+\ldots+x_{n-1}x_n$&$a_1,a_2\in K$\\
\hline
\end{tabular}
\newline
\caption{}\label{table8.1}
\end{table}

\begin{table}[ht]
\begin{tabular}[20pt]{|c|ll|}
\hline 
Name& Normal form & $\mu$ \\
\hline
$\mathrm{D}_{4}$&$a_1x_1^2+ a_2x_2^2+x_1^3+x_2^3+x_3x_4+\ldots+x_{n-1}x_n$& $4$\\
$\mathrm{D}_{6}$&$a_1x_1^2+ a_2x_2^2+x_1^2x_2+x_1x_2^3+x_3x_4+\ldots+x_{n-1}x_n$& $6$\\
$\mathrm{E}_{7}$&$a_1x_1^2+ a_2x_2^2+x_1^3+x_1x_2^3+x_3x_4+\ldots+x_{n-1}x_n$& $7$\\
$\mathrm{E}_{8}$&$a_1x_1^2+ a_2x_2^2+x_1^3+x_2^5+x_3x_4+\ldots+x_{n-1}x_n$& $8$\\
\hline
\end{tabular}
\newline
\caption{}\label{table9}
\end{table}

\subsection{Adjacencies of simple, uni- and bi-modal singularities}
In the following we give diagrams of adjacencies for all class of singularities in Table 1--7. Moreover a singularity in these tables deforms only into classes listed in the diagrams. Recall that a class $\mathcal{D}$ of singularities is {\em adjacent} to class $\mathcal{C}$, $\mathcal{C} \leftarrow \mathcal{D}$, if every $f\in \mathcal{D}$ can be {\em deformed into} an element in $\mathcal{C}$ by a deformation. That is, there exists an unfolding $f_{t}$ of $f=f_{t_0}$ over an affine variety $T,t_0$ and a Zariski open subset $V\subset T$ such that $f_{t}\in \mathcal{C}$ for all $t\in V$.\\
\begin{theorem}\label{thm35}
Any singularity in Table 1--11 deforms only into singularities given in the adjacency diagrams 1--15:
\end{theorem}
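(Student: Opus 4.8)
The plan is to exploit the fact, recorded in Section~\ref{sec2}, that a semiuniversal unfolding of an isolated hypersurface singularity is right complete. Consequently, for a normal form $f$ taken from one of Tables~\ref{table1}--\ref{table9}, the classes into which $f$ deforms are \emph{exactly} the right-equivalence classes occurring among the members $f_\lambda$ of its semiuniversal unfolding $F({\bf x},\lambda)=f+\sum_{i=1}^N\lambda_i{\bf x}^{\alpha_i}$ for $\lambda$ in a Zariski neighbourhood of $0\in\mathbb{A}^N$. This turns the statement into the problem of identifying, for each $f$, the right class of the generic and of the special fibres $f_\lambda$, which is precisely what the singularity determinator of Section~\ref{sec4} is built to do. So the first step is to fix this dictionary once and for all: ``adjacencies of $f$'' $=$ ``right classes appearing in the semiuniversal unfolding of $f$''.

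Second, I would pin down the numerical constraints any target must satisfy, so as to reduce each instance to a finite check. Two invariants are semicontinuous along a deformation. The Milnor number is semicontinuous because the colength of the relative Jacobian ideal is an upper semicontinuous function of the fibre, so $\mu(g)\le\mu(f)$ for every $g$ into which $f$ deforms. The modality is semicontinuous by its very definition as $\min_U \mathcal{R}_k\text{-}\mathrm{mod}(U)$ over neighbourhoods $U$ of $j^kf$: choosing $U$ realizing $\mathrm{mod}(f)$, every fibre lying in $U$ has modality at most $\mathrm{mod}(f)\le 2$. This second inequality is decisive, since it forces every deformation of an $f$ from our tables to be again simple, unimodal or bimodal, hence already classified by Theorems~\ref{thm21}--\ref{thm24}; the pool of candidate targets is thus finite and explicit. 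For $p>2$ I would further reduce to cores in at most three variables by the splitting lemma, as adding a nondegenerate quadratic $+x_j^2$ changes neither $\mu$ nor the adjacency pattern; the $n>3$ rows of Tables~\ref{table5} and~\ref{table8} then inherit their diagrams from the three-variable cases, while the genuinely characteristic-$2$ phenomena of Tables~\ref{table5.1},~\ref{table8.1} and~\ref{table9}, where the parity of $n$ and the quadratic part interact, are treated separately.

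Third comes the constructive (``arrow exists'') direction. For each edge $\mathcal{C}\leftarrow\mathcal{D}$ drawn in Diagrams~1--15 I would write down one explicit subfamily of the semiuniversal unfolding of the normal form of $\mathcal{D}$ and verify, by running the determinator on its generic member, that this member lies in $\mathcal{C}$; transitivity of adjacency then lets me record only the covering edges and deduce the rest. The complementary and harder (``only'') direction requires traversing the finite candidate list from the second step and certifying, for each candidate $\mathcal{C}$ not joined to $\mathcal{D}$, that $\mathcal{C}$ never appears in the unfolding of $\mathcal{D}$.

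I expect this last point to be the main obstacle. Ruling out an adjacency cannot be achieved by producing a single deformation; it demands a global analysis of the semiuniversal unfolding, and in positive characteristic the usual complex-analytic machinery (monodromy, intersection forms, Gabrielov-type arguments) is unavailable, while several normal forms split into separate cases $p=5,7,11,\dots$. The plan is to exclude the spurious arrows by combining (i) the invariants $\mu$ and $\mathrm{mod}$ wherever they already separate the classes; (ii) finer semicontinuous data, such as the corank and the leading Newton data of the normal forms, which obstruct certain jumps; and (iii) transitivity, exploiting that an edge $\mathcal{C}\leftarrow\mathcal{D}$ would force the whole down-set of $\mathcal{C}$ to be adjacent to $\mathcal{D}$, so that a single forbidden consequence suffices to exclude it. Assembling these exclusions consistently across all of Tables~\ref{table1}--\ref{table9} and all relevant characteristics is the bulk of the work, and it is exactly here that the explicit normal forms and the determinator of Section~\ref{sec4} are indispensable.
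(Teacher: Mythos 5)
Your proposal is correct and takes essentially the same route as the paper: the paper proves Theorem~\ref{thm35} precisely by inductively applying the determinator theorems of Section~\ref{sec4} to the members of unfoldings, which is the engine of your plan, and the scaffolding you add (right completeness of the semiuniversal unfolding, upper semicontinuity of $\mu$ and of modality, reduction via the splitting lemma) is exactly the framework the paper imports from \cite{GN14}. The exclusion step you single out as the main obstacle is resolved in the paper the same way you propose, namely through the semicontinuous jet and quasijet conditions built into the determinator's branching, so no genuinely different idea is involved.
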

Adjacency diagrams:
\begin{itemize}
\item[1,2,3.] $\xymatrix{
A_{k-1}\ar@{<-}[r]\ar@{<-}[d]&D_k\ar@{<-}[r]\ar@{<-}[d]&E_{k+1}\ar@{<-}[d]&A_{p}\ar@{<-}[r]\ar@{<-}[d]&D_{p}\ar@{<-}[d]&D_{k+6}\ar@{<-}[r]\ar@{<-}[d]&J_{2,k-1}\ar@{<-}[r]\ar@{<-}[d]&J_{3,k-4}\ar@{<-}[d]\\
A_{k}\ar@{<-}[r]&D_{k+1}\ar@{<-}[r]& E_{k+2}&A_{2p}\ar@{<-}[r]&D_{2p}&D_{k+7}\ar@{<-}[r]&J_{2,k}\ar@{<-}[r]& J_{3,k-3}\\
}$
\item[4,5,6.] $E_{8}\leftarrow J_{2,0} $; $E_{14}\leftarrow J_{3,0} $; $J_{s,k}\leftarrow E_{6s+k-1}; s=2,3; k=1,2,3$.
\item[7.] $T_{r',s',t'}\leftarrow T_{r,s,t} $ if $(r',s',t')\leq (r,s,t)$.
\item[8.] $\xymatrix{
Q_{12}\ar@{<-}[r]&Q_{2,0}\ar@{<-}[r]& Q_{2,1}\ar@{<-}[r]\ar@{<-}[d]&Q_{2,2}\ar@{<-}[r]\ar@{<-}[d]&Q_{2,3}\ar@{<-}[r]\ar@{<-}[d]&\cdots\\
&&Q_{16}\ar@{<-}[r]&Q_{17}\ar@{<-}[r]&Q_{18}&
}$
\item[9,10.] $ \xymatrix{
& S_{1,1}\ar@{<-}[r]\ar@{->}[dl]\ar@{<-}[dr]&S_{1,2}\ar@{<-}[r]&\cdots&& W_{1,1}\ar@{<-}[r]\ar@{->}[dl]\ar@{<-}[dr]&W_{1,2}\ar@{<-}[r]&\cdots\\
S_{1,0}\ar@{<-}[dr]&&S_{16}\ar@{<-}[r]\ar@{->}[dl]&S_{17}&W_{1,0}\ar@{<-}[dr]&&W_{17}\ar@{<-}[r]\ar@{->}[dl]&W_{18}\\
&S^{\sharp}_{1,1}\ar@{<-}[r]&S^{\sharp}_{1,2}\ar@{<-}[r]&\cdots&&W^{\sharp}_{1,1}\ar@{<-}[r]&W^{\sharp}_{1,2}\ar@{<-}[r]&\cdots
}$
\item[11.] $\xymatrix{
T_{3,3,4}\ar@{<-}[r]\ar@{<-}[d]& Q_{10}\ar@{<-}[r]\ar@{<-}[d]&Q_{11}\ar@{<-}[r]\ar@{<-}[d]&Q_{12}\ar@{<-}[d]&&\\
T_{3,4,4}\ar@{<-}[r]\ar@{<-}[d]& S_{11}\ar@{<-}[r]\ar@{<-}[d]&S_{12}\ar@{<-}[r]\ar@{<-}[d]&S_{1,0}&U_{16}\ar@{->}[d]&\\
T_{4,4,4}\ar@{<-}[r]& U_{12}\ar@{<-}[r]&U_{1,0}\ar@{<-}[rr]&&U_{1,1}\ar@{<-}[r]&\cdots
}$
\item[12.] $\xymatrix{
X_{1,0}\ar@{<-}[r]& X_{1,1}\ar@{<-}[r]\ar@{<-}[d]&X_{1,2}\ar@{<-}[r]\ar@{<-}[d]&\cdots\\
&Y_{1,1}\ar@{<-}[r]&Y_{1,2}\ar@{<-}[r]&\cdots
}$
\item[13.] $Y_{k',l'}\leftarrow Y_{k,l}$ if $(k',l')\leq (k,l)$.
\item[14.] $\xymatrix{
Z_{13}\ar@{<-}[r]&Z_{1,0}\ar@{<-}[r]& Z_{1,1}\ar@{<-}[r]\ar@{<-}[d]&Z_{1,2}\ar@{<-}[r]\ar@{<-}[d]&Z_{1,3}\ar@{<-}[r]\ar@{<-}[d]&\cdots\\
&&Z_{17}\ar@{<-}[r]&Z_{18}\ar@{<-}[r]&Z_{19}&
}$

\item[15.] $\xymatrix{
T_{2,4,5}\ar@{<-}[r]\ar@{<-}[d]& Z_{11}\ar@{<-}[r]\ar@{<-}[d]&Z_{12}\ar@{<-}[r]\ar@{<-}[d]&Z_{13}\ar@{<-}[d]\\
T_{2,5,5}\ar@{<-}[r]&W_{12}\ar@{<-}[r]&W_{13}\ar@{<-}[r]&W_{1,0}
}$
\end{itemize}

$$$$
The following corollaries follow from these theorems and concrete calculations. Let $f\in K[[{\bf x}]]$, with $p=\mathrm{char}(K)>0$ such that $\mathrm{rmod}(f)\leq 2$. Then
\begin{corollary}\label{coro30}
If $p\leq 3$, then $f$ is of type $A$ or $D$.
\end{corollary}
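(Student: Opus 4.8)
The plan is to deduce the corollary directly from the completed classification: Theorems \ref{thm21}--\ref{thm24}, together with the classification of right simple singularities in positive characteristic, list \emph{every} $f$ with $\mathrm{rmod}(f)\le 2$ up to right equivalence. Since the hypothesis $p\le 3$ leaves only the two cases $p=3$ and $p=2$, the whole argument is an inspection of the normal-form tables: I would show that for $p\in\{2,3\}$ every row whose name is not of the form $A_\ast$ or $D_\ast$ is empty, i.e.\ its defining condition on $p$ cannot be met.

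First I would treat $p=3$, where the relevant lists are Tables \ref{table1}, \ref{table3}, \ref{table4}, \ref{table5} (unimodal) and \ref{table5.2}, \ref{table6}, \ref{table7}, \ref{table8} (bimodal). Every exceptional entry carries a hypothesis of the form $3<p$, $5<p$, $7<p$, $11<p$, or $p=5$, $p=7$ (for the $E$-, $J$-, $W$-, $X$-, $Y$-, $Z$-, $P$-, $Q$-, $S$-, $U$-series), each of which fails at $p=3$; for the two $T_{r,s,t}$-rows the constraints $3\le r\le s\le t<p$ with $\tfrac1r+\tfrac1s+\tfrac1t<1$, respectively $3\le r\le s<p<t<2p$, force $p>3$ and so admit no solution when $p=3$. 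The only surviving rows are then the $A_k$-series and the $D_p$, $D_{2p}$, $D_k$-series, and the higher-dimensional Tables \ref{table4}, \ref{table5}, \ref{table7}, \ref{table8} consist of the stabilizations $g+z^2$ and $g+x_4^2+\cdots+x_n^2$, which inherit the type of $g$. This settles $p=3$.

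The hard part will be $p=2$, where the classification is governed by the separate Tables \ref{table5.1}, \ref{table8.1}, \ref{table9}. For odd $n$ the unimodal and bimodal lists reduce to $A_2$ and $A_4$, which are of type $A$, so these are immediate. The delicate case is even $n$, namely Table \ref{table9}, which nominally records the entries $E_7$ and $E_8$ alongside $D_4$ and $D_6$; here the $p$-conditions no longer do the work, and I would have to analyse these forms directly in characteristic $2$, exhibiting the explicit coordinate changes that place each of them among the $A$- and $D$-type normal forms. This characteristic-$2$, even-dimensional reconciliation is the only step not immediate from reading off the conditions on $p$, and is where the substance of the proof lies.
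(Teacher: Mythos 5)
Your reading of the $p=3$ case reproduces the paper's own (essentially unstated) argument: the paper says only that the corollaries ``follow from these theorems and concrete calculations,'' i.e.\ from inspecting the conditions in the normal-form tables, and for $p=3$ every non-$A$, non-$D$ row of Tables \ref{table3}, \ref{table4}, \ref{table6}, \ref{table7} indeed carries a hypothesis ($3<p$, $5<p$, $p=5$, $p=7$, $3\le r\le s\le t<p$, $3\le r\le s<p<t<2p$, etc.) that fails at $p=3$, while Tables \ref{table5} and \ref{table8} contribute only stabilizations $g+x_4^2+\cdots+x_n^2$. The same inspection disposes of $p=2$ with $n$ odd (Tables \ref{table5.1} and \ref{table8.1} contain only $A_2$ and $A_4$). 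Up to this point you match the paper.

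The gap is your final step, and it is not merely delicate but impossible: no coordinate change can place the characteristic-$2$ entries $E_7$ and $E_8$ of Table \ref{table9} among $A$- or $D$-forms. Right equivalence preserves the Milnor number, and in characteristic $2$ one computes $\mu=7$ for $a_1x_1^2+a_2x_2^2+x_1^3+x_1x_2^3$ (here $j(f)=\langle x_1^2+x_2^3,\,x_1x_2^2\rangle$) and $\mu=8$ for $a_1x_1^2+a_2x_2^2+x_1^3+x_2^5$ (here $j(f)=\langle x_1^2,\,x_2^4\rangle$), whereas the $A$- and $D$-classes of the characteristic-$2$ classification with $\mathrm{rmod}\le 2$ have $\mu\in\{1,\ldots,5\}$ resp.\ $\mu\in\{4,6\}$; the determinator (Theorems {\bf 123}, {\bf 130}, {\bf 133}) shows explicitly that the would-be receivers ($A_7$, $A_8$, $D_7$, $D_8$) all have $\mathrm{rmod}\ge 3$. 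Note also that in characteristic $2$ the quadratic moduli $a_1,a_2$ cannot be normalized away, since $u^2=1+h$ is solvable in $K[[x_1,x_2]]$ only when $h$ is a square of a power series --- this is exactly why these are genuine $2$-parameter modular families and why Morse-lemma-type reductions fail. So the corollary, read literally against the paper's own Theorem \ref{thm24}, Table \ref{table9} and determinator Theorem {\bf 132}, admits the characteristic-$2$, even-$n$ bimodal exceptions $E_7$ and $E_8$ (consistently, the bound $\mu\le 4p=8$ of Corollary \ref{coro31} is attained by $E_8$); the tension here lies in the paper's statement itself, not in a missing computation. A correct write-up must therefore either restrict the assertion to $p=3$ (plus $p=2$, $n$ odd) or record $E_7$, $E_8$ as characteristic-$2$ exceptions --- it cannot absorb them into types $A$ and $D$ as you propose.
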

\begin{corollary}\label{coro31}
$$\mu(f)\leq 4p.$$
\end{corollary}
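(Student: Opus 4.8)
The plan is to reduce the statement to a finite inspection of the normal-form tables and then read off the Milnor numbers. The first step is to observe that $\mu$ is invariant under stabilization: adding a nondegenerate quadratic form in new variables (split into hyperbolic pairs $x_ix_{i+1}$ in characteristic two) leaves $\mu$ unchanged. By the shape of Tables \ref{table5} and \ref{table8}, together with the rows $g(x,y)+z^2$ and $g(x,y,z)+\cdots$ of Tables \ref{table4} and \ref{table7}, every singularity of right modality $\le 2$ is such a stabilization of a ``core'' singularity in at most three variables. Hence, invoking the classification Theorems \ref{thm21}--\ref{thm24}, it suffices to verify $\mu\le 4p$ for the finitely many rows of Tables \ref{table1}, \ref{table3}, \ref{table4} (unimodal, $p>2$), of Tables \ref{table5.2}, \ref{table6}, \ref{table7} (bimodal, $p>2$), and of Tables \ref{table5.1}, \ref{table8.1}, \ref{table9} (the characteristic-two cases).

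Second, I would dispose of the easy rows. For the sporadic classes with a fixed Milnor number (the exceptional $E$, $W$, $Z$, $Q$, $S$, $U$ entries) the admissibility condition on $p$ printed in the table---typically of the form $7<p$ or $5<p$---already forces $p$ large enough that $4p$ exceeds the listed $\mu$; these cases are immediate. For $p=3$ one may shortcut even further: by Corollary \ref{coro30} only the $A$- and $D$-series occur, and there the exponent bounds give $\mu\le 3p\le 4p$ at once. For the remaining series I would substitute the explicit exponent ranges into the formula for $\mu$, writing it as an affine function of $p$ and of the modulus parameters and then bounding each parameter by the stated inequalities; for instance an $A_k$ gives $\mu=k\le 3p-2$, a $D_k$ gives $\mu\le 3p$, and an infinite family such as $J_{2,q}$, $X_{1,q}$, $Y_{r,s}$, $W^{\sharp}_{1,\bullet}$, $S^{\sharp}_{1,\bullet}$ or $Z_{1,q}$ has its modulus parameter bounded linearly in $p$ by its condition (e.g.\ $6+q<p$), yielding $\mu<4p$.

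The tightest case, and the one to treat with care, is the triangle series $T_{r,s,t}$. In the bimodal three-variable Table \ref{table7} the conditions $3\le r\le s<p<t<2p$ give $r,s\le p-1$ and $t\le 2p-1$, hence
\[
\mu=r+s+t-1\le (p-1)+(p-1)+(2p-1)-1=4p-4,
\]
which is the largest value occurring for $p>2$ and still lies below $4p$. In characteristic two the maximum is attained by $E_8$ in Table \ref{table9} with $\mu=8=4p$, so the bound is sharp precisely there. The main obstacle is therefore not conceptual but the bookkeeping: one must run through every family and handle the strict versus non-strict inequalities in the conditions correctly, so as to be certain that no series---most dangerously $T_{r,s,t}$ and the quasihomogeneous $q$-series---crosses the threshold $4p$.
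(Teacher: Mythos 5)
Your proposal is correct and is essentially the paper's own argument: the paper derives Corollary \ref{coro31} from the classification Theorems \ref{thm21}--\ref{thm24} by ``concrete calculations'', which is exactly the stabilization-plus-table inspection you carry out, and you correctly isolate the extremal cases, namely $T_{r,s,t}$ in Table \ref{table7} with $\mu=r+s+t-1\le 4p-4$ for $p>2$, and $E_8$ in Table \ref{table9} with $\mu=8=4p$ in characteristic $2$. The only point worth adding is that $\mathrm{rmod}(f)\le 2$ also includes the simple singularities, whose $ADE$ normal forms (classified in \cite{GN14}) have Milnor numbers below the unimodal ranges appearing in the tables, so the bound holds there a fortiori.
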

\begin{corollary}\label{coro32}
The $\mu$-constant stratum of $f$ is a linear space, and hence smooth.
\end{corollary}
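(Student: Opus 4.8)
The plan is to read off $\Delta_\mu$ directly from the explicit normal forms and to exhibit it, in each case, as a coordinate subspace of the semiuniversal base $\mathbb{A}^N$. Since $\Delta_\mu$, up to isomorphism, depends only on the right class of $f$, I would first reduce, using the classification Theorems \ref{thm21}--\ref{thm24}, to the case where $f$ is one of the normal forms of Tables \ref{table1}--\ref{table9}, and establish linearity for that distinguished representative. For such an $f$ I fix the monomial basis $\{\mathbf{x}^{\alpha_1},\ldots,\mathbf{x}^{\alpha_N}\}$ of $\mathfrak{m}/\mathfrak{m}\,j(f)$, so that $f_\lambda=f+\sum_{i=1}^N\lambda_i\mathbf{x}^{\alpha_i}$ is the semiuniversal unfolding and the $\lambda_i$ are \emph{linear} coordinates on $\mathbb{A}^N$; the monomials carrying the free parameters $a,b,a_0,a_1$ of the tables form a distinguished subset $S$ of this basis.

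The assertion to prove is that
$$\Delta_\mu=\{\lambda\in\mathbb{A}^N\ :\ \lambda_i=0\text{ for all }\mathbf{x}^{\alpha_i}\notin S\},$$
a coordinate (hence linear, hence smooth) subspace whose dimension equals $|S|=\mathrm{rmod}(f)$, in accordance with Corollary \ref{coro33}. For each family I would compute the Jacobian ideal $j(f_\lambda)$ and determine the locus where the origin stays a critical point of maximal Milnor number $\mu$. The point is that this locus is cut out by \emph{linear} equations in $\lambda$: one shows that switching on any $\lambda_i$ with $\mathbf{x}^{\alpha_i}\notin S$ strictly lowers $\mu(f_\lambda)$, while the parameters attached to $S$ may vary freely without changing $\mu$, on the open parameter range fixed by the table conditions such as $a\neq 0$, $4b^3+27\neq 0$, $a_0^2\neq 4$. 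A representative computation is $\mathrm{A}_k=x^2+ay^p+y^{k+1}$ in characteristic $p$: here $j(f_\lambda)$ collapses because $\partial_y(y^p)=0$, so that $y^p$ survives as a genuine modulus although it lies \emph{below} the Newton boundary of $x^2+y^{k+1}$, and a short calculation gives that $\Delta_\mu$ is the coordinate line carrying $a$.

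Because the classification yields only finitely many families for modality at most $2$ --- with the infinite series $\mathrm{A}_k,\mathrm{D}_k,\mathrm{T}_{r,s,t},\mathrm{J}_{s,q},\ldots$ handled by uniform computations in the relevant variables --- this reduces the theorem to a finite, if lengthy, bookkeeping over Tables \ref{table1}--\ref{table9}. In each case the two halves of the argument are: (i) deforming by a monomial in $S$, or by any monomial lying strictly above the isolating truncation of $f$, leaves $\mu$ unchanged by (weighted) finite determinacy; and (ii) deforming by any remaining basis monomial makes the origin degenerate enough for $\dim_K K[[\mathbf{x}]]/j(f_\lambda)$ to drop. Assembling (i) and (ii) yields the coordinate description of $\Delta_\mu$ above.

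I expect the genuine obstacle to be the positive-characteristic failure of ordinary quasihomogeneity. In characteristic $0$ a monomial below the Newton boundary, such as the $y^p$ above, can be removed by a coordinate change and contributes nothing to the modality; in characteristic $p$ the inseparability of the relevant coordinate changes turns exactly these sub-boundary monomials into moduli, so the filtration and the determinacy estimates must be organized around the true isolating truncation of $f$ rather than a single homogeneous leading form. The most delicate cases will be the exceptional and $\sharp$-type bimodal families --- for instance $\mathrm{W}^{\sharp}_{1,\bullet}=(x^2+y^3)^2+\cdots$, the $\mathrm{S}^{\sharp}_{1,\bullet}$ and the $\mathrm{U}_{1,\bullet}$ series --- where the leading form is degenerate and one must verify by hand both that the listed modal directions preserve $\mu$ and that every other basis monomial strictly decreases it.
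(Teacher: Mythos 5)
Your overall strategy coincides with the paper's: the paper proves this corollary only by appeal to the classification theorems plus ``concrete calculations'', i.e.\ exactly your plan of reducing to the normal forms of Tables \ref{table1}--\ref{table9} and computing $\Delta_\mu$ case by case in the semiuniversal unfolding. However, your concrete description of the stratum is wrong in an identifiable family of cases, and the failure sits precisely at your step (ii). You claim $\Delta_\mu$ is the coordinate subspace spanned by the monomials $S$ carrying the table parameters, of dimension $|S|=\mathrm{rmod}(f)$. But for $D_p\colon x^2y+y^{p-1}$ in Table \ref{table3} the normal form carries \emph{no} parameter, so $S=\emptyset$ while $\mathrm{rmod}=1$; likewise $D_{2p}$ in Table \ref{table6} carries one parameter but is bimodal. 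Your dimension count thus already contradicts Corollary \ref{coro33} in these cases. The underlying phenomenon is that in characteristic $p$ the Milnor number does not separate the classes: $\mu(A_p)=\mu(D_p)=p$, and by the paper's own adjacency diagrams ($A_p\leftarrow D_p$, $A_{2p}\leftarrow D_{2p}$) the unfolding of $D_p$ contains nearby $A_p$-points with the \emph{same} $\mu$. Hence $\Delta_\mu$ is strictly larger than the span of $S$, and switching on non-modal basis monomials in a correlated way does \emph{not} lower $\mu$.

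Concretely, for $p=5$, $f=x^2y+y^4$, with unfolding $f_\lambda=f+\lambda_1x+\lambda_2y+\lambda_3x^2+\lambda_4xy+\mu_2y^2+\mu_3y^3$, eliminating $x$ from $f_x=0$ (for $\lambda_3\neq 0$ one has $x=-\lambda_4y/2(y+\lambda_3)$) and requiring the restriction of $f_y$ to have order $5$ gives, near the origin,
$$\Delta_\mu=\{\lambda_1=\lambda_2=0,\ \mu_3=-\lambda_3,\ \mu_2=\lambda_3^2,\ \lambda_4^2=-\lambda_3^3\};$$
for instance at $\lambda_3=1,\lambda_4=2,\mu_2=1,\mu_3=-1$ one computes $f_y$ restricted to $\{f_x=0\}$ to be exactly $4y^5/(1+y)^2$, so $\mu=5$ there. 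Every non-modal direction participates, so your claim (ii) fails; worse, this locus is a cuspidal curve rather than a coordinate subspace, so the $D_p$ and $D_{2p}$ entries require a genuinely separate analysis (of the locus of $A_p$-, resp.\ $A_{2p}$-points inside the unfolding) that neither your argument nor a naive reading of the tables supplies --- indeed this case puts strain even on the literal statement of the corollary, and you should treat it explicitly rather than fold it into the generic bookkeeping. A smaller, fixable slip of the same kind: for $A_k$ in Table \ref{table1} the table parameter sits on $x^{k+1}$, which lies in $\mathfrak{m}\cdot j(f)$ and is \emph{not} a basis monomial of $\mathfrak{m}/\mathfrak{m}\cdot j(f)$; the $\mu$-constant direction in the unfolding is the kernel monomial $x^p$ (there the stratum is still a line, so only your identification of $S$, not linearity, needs amending).
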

\begin{corollary}\label{coro33}
$$\mathrm{rmod}(f)=\mathrm{pmod}(f).$$
\end{corollary}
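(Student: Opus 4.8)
The final statement is Corollary \ref{coro33}, which asserts $\mathrm{rmod}(f)=\mathrm{pmod}(f)$ for any $f$ with $\mathrm{rmod}(f)\leq 2$. My plan is to deduce this directly from Corollary \ref{coro32} together with the structural facts about modality established in Section \ref{sec2}.

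First, I would recall the two inequalities that constrain $\mathrm{pmod}$ and $\mathrm{rmod}$. By definition, $\mathrm{pmod}(f)$ is the dimension at $0$ of the $\mu$-constant stratum $\Delta_\mu \subset \mathbb{A}^N$ inside the semiuniversal unfolding (with trivial section) of $f$. On the other hand, by the discussion in \S\ref{sec21} and \cite{GN14}, the semiuniversal unfolding is right complete, so $\mathrm{rmod}(f)$ equals the modality of $f$ with respect to this unfolding, i.e. the maximal number of parameters on which the families of right-equivalence classes through nearby fibres depend. The general inequality $\mathrm{rmod}(f)\leq \mathrm{pmod}(f)$ should hold because right-equivalent fibres have the same Milnor number, so each family of right classes appearing in the modality count lies inside $\Delta_\mu$; hence the dimension of $\Delta_\mu$ is at least the maximal fibre-family dimension, which is $\mathrm{rmod}(f)$. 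This direction holds with no hypothesis on the modality.

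The reverse inequality $\mathrm{pmod}(f)\leq \mathrm{rmod}(f)$ is where the hypothesis $\mathrm{rmod}(f)\leq 2$ and the classification enter. By Corollary \ref{coro32}, the $\mu$-constant stratum of any such $f$ is a linear subspace of $\mathbb{A}^N$, in particular smooth and irreducible of some dimension $d=\dim_0\Delta_\mu=\mathrm{pmod}(f)$. On this linear stratum the coefficients varying are exactly the moduli parameters $a_0,a_1,\dots$ (and the $b$'s) displayed in the normal-form tables of \S\ref{sec3}; the number of such free moduli in each normal form equals $\mathrm{rmod}(f)$ by Theorems \ref{thm21}--\ref{thm24}. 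Concretely, I would argue that the linear stratum, under the geometric quotient $p_i:X_i\to X_i/G$ of the relevant Rosenlicht stratum, maps with generically finite (indeed quasi-finite) fibres onto the parameter space of right classes, because a normal form is by definition a modular family: each fibre is right equivalent to only finitely many others in the family. Thus $\dim\Delta_\mu$ equals the dimension of its image in the quotient, which is $\mathrm{rmod}(f)$.

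The main obstacle is the second inequality, specifically verifying that the linear $\mu$-constant stratum maps finite-to-one onto the right-modality parameter space rather than collapsing positive-dimensional directions. For singularities of type $A$ and $D$ one must handle the extra section-direction coefficients that appear in positive characteristic (the $a_i y^{p}, a_i y^{2p}$ terms in the tables), checking that these genuinely contribute to the modulus count and are not absorbed by further coordinate changes. In practice this is a finite check against the tables: for each normal form in Tables \ref{table1}--\ref{table9} one reads off the dimension of the linear stratum from Corollary \ref{coro32}'s computation and compares it with the displayed number of moduli, which equals $\mathrm{rmod}(f)\in\{0,1,2\}$. Since both corollaries \ref{coro31} and \ref{coro32} are stated to follow from Theorem \ref{thm35} and concrete calculations, the equality $\mathrm{pmod}(f)=\mathrm{rmod}(f)$ reduces to matching these two integers across the classification, completing the proof.
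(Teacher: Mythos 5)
Your second inequality ($\mathrm{pmod}(f)\leq\mathrm{rmod}(f)$, via Corollary \ref{coro32} plus a table-by-table matching of $\dim\Delta_\mu$ against the number of moduli in the normal forms) is in the spirit of what the paper actually does: it derives the corollaries from Theorems \ref{thm21}--\ref{thm35} ``and concrete calculations,'' i.e.\ a finite check across the classification. But your first inequality contains a genuine gap. You claim $\mathrm{rmod}(f)\leq\mathrm{pmod}(f)$ holds ``with no hypothesis on the modality'' because ``each family of right classes appearing in the modality count lies inside $\Delta_\mu$.'' This is false. The modality of $f$ is computed from \emph{all} nearby fibres $f_\lambda$ of the semiuniversal unfolding, and by the adjacency diagrams these include positive-dimensional families of right classes of \emph{adjacent} types with strictly smaller Milnor number, which lie entirely outside $\Delta_\mu$. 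For instance, for $f\in E_{12}$ the unfolding contains a one-parameter family of $J_{2,0}$-classes with $\mu=10<12$; this family is counted in the modality but avoids the $\mu$-constant stratum. Within a fixed family the Milnor number is constant, yes, but nothing forces it to equal $\mu(f)$, so the maximal-dimensional family need not sit in $\Delta_\mu$. Indeed, if your general argument worked it would settle one half of Conjecture \ref{conj24} for arbitrary $f$, which the paper explicitly leaves open; the hypothesis $\mathrm{rmod}(f)\leq 2$ is needed for \emph{both} inequalities, not just one.

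The correct route to $\mathrm{rmod}(f)\leq\mathrm{pmod}(f)$ under the hypothesis is again through the classification: for each normal form in Tables \ref{table1}--\ref{table9} one checks directly that the displayed moduli parameters (the $a$'s and $b$'s) define $\mu$-constant directions realizable inside the semiuniversal unfolding with section (e.g.\ for $E_{12}$, $\mu(x^3+y^7+axy^5)=12$ for all $a$ when $p>7$), so that $\dim_0\Delta_\mu\geq\mathrm{rmod}(f)$; this is part of the ``concrete calculations'' the paper invokes. A smaller caveat on your converse direction: modularity of the normal form gives finiteness of right-equivalent fibres \emph{within the family of normal forms}, but to conclude $\dim\Delta_\mu\leq\mathrm{rmod}(f)$ you also need (i) that every $\mu$-constant nearby fibre belongs to the \emph{same} class as $f$ --- which follows from Theorem \ref{thm35} together with the fact, read off from the tables, that adjacencies strictly decrease $\mu$ --- and (ii) that the map from $\Delta_\mu$ to the moduli has finite fibres, i.e.\ that $\Delta_\mu$ contains no positive-dimensional right-trivial subfamilies; the latter is where the transversality of the unfolding to the orbit ($\mathfrak{m}\cdot j(f)$ being the orbit tangent space) and the completeness theory of \cite{GN14} must be invoked, rather than modularity alone.
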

\section{Singularity Determinator}\label{sec4}
In \cite{Arn76} Arnol'd supplied lists of normal forms which contain all the singularities with the modality number $\mathrm{mod}=0, 1, 2$, all the singularities with Milnor number $\mu\leq 16$, all the singularities of corank 2 with nonzero 4-jet, all the singularities of corank 3 with a 3-jet, which determine an irreducible cubic, and some other singularities. The proof of Arnol'd is organized as a determinator consisting of 105 theorems. We follow this scheme and organize our proof a singularity determinator of 152 theorems. This gives an algorithm finding for every given sigularity its place in the list of $\S$ \ref{sec3}. 

{\bf Notations:}

$ \begin{array}{ll}
\Rightarrow & \text{``implies"}.\\
\mapsto &\text{``see"}.\\
\mathrm{crk} &\text{the corank of the Hessian of} f \text{at the origin}.\\
\Delta &\text{discriminant, } \Delta=4(a^3+b^3)+27- a^2 b^2 - 18ab.\\
j_{\{ {\bf x}^{\alpha_i}\}}f({\bf x})& \text{quasijet of } f \text{ determined by } \{{\bf x}^{\alpha_i}\}, \text{ defined as follows}. 
\end{array}$

Here $\{\alpha_i\}$ is a system of $n$ points defining an affine hyperplane $H$ in $\mathbb{R}^n$. Let $v\colon \mathbb{R}^n\to \mathbb{R}$ be the linear form defining $H$ with $v(\alpha_i)=1$ for all $i$. Then $j_{\{ {\bf x}^{\alpha_i}\}}f$ is the image of $f$ in $K[[{\bf x}]]$ modulo the ideal generated by $x^{\alpha},\ v(\alpha)>1$. 
\subsection{Singularity determinator in characteristic $\geq 5$}
\begin{itemize}
\item[{\bf 1.}] $\mu(f)<\infty\Rightarrow $ one of the four possibilities holds:

$\begin{array}{lllll}
\mathrm{crk}(f)&\leq & 1 &\mapsto &{\bf 2},\\
&= & 2 &\mapsto &{\bf 4-73},\\
&= & 3 &\mapsto &{\bf 74-119},\\
&> & 3 &\mapsto &{\bf 120}.
\end{array}$
\item[{\bf 2.}] $\mathrm{crk}(f)\leq 1,\mu< 3p \Rightarrow \mathrm{rmod}(f)=\lfloor \mu/p\rfloor$ and $ f\in A_{\mu}$.
\item[{\bf 3.}] $\mathrm{crk}(f)\leq 1,\mu\geq 3p \Rightarrow \mathrm{rmod}(f)\geq 3$.
\end{itemize}

{\bf Corank 2 Singularities}
\begin{itemize}
\item[] Through theorems {\bf 4--73}, $f\in K[[x,y]]$.
\item[{\bf 4.}] $j^2(f)=0\Rightarrow $ one of the four possibilities holds:

$\begin{array}{lllll}
j^3f&\sim_r & x^2y+y^3 &\mapsto &{\bf 5},\\
&\sim_r & x^2 y &\mapsto &{\bf 6},\\
&\sim_r & x^3 &\mapsto &\textbf{9--30},\\
&=& 0 &\mapsto &\textbf{31--73}.
\end{array}$
\item[{\bf 5.}] $j^3(f)=x^2y+y^3 \Rightarrow f\in D_4$.
\item[{\bf 6.}] $j^3(f)=x^2y \Rightarrow f\sim_r x^2y+\alpha(y), j^3(\alpha)=0\mapsto \textbf{7--8}$.
\item[{\bf 7.}] $f=x^2y+\alpha(y), j^3(\alpha)=0, k:=\mu(\alpha)\leq 3p-1\Rightarrow f\in D_{k+2}$.
\item[{\bf 8.}] $f=x^2y+\alpha(y), j^3(\alpha)=0, \mu(\alpha)\geq 3p\Rightarrow \mathrm{rmod}(f)\geq 3$.
\item[] Through theorems {\bf 9--12}, $k=1,2,3$ for $p>7$, $k=1,2$ for $p=7$,$k=1$ for $p=5$.
\item[{\bf 9.}] $j_{x^3,y^{3k} }f(x,y)=x^ 3 \Rightarrow $ one of the four possibilities holds:

$\begin{array}{lllll}
j_{x^3,y^{3k+1} }f(x,y)&\sim_r & x^3+y^{3k+1} &\mapsto &{\bf 10},\\
j_{x^3,xy^{2k+1} }f(x,y)&\sim_r & x^3+xy^{2k+1} &\mapsto &{\bf 11},\\
j_{x^3,y^{3k+2} }f(x,y)&\sim_r & x^3+y^{3k+2} &\mapsto &{\bf 12,13},\\
j_{x^3,y^{3k+2} }f(x,y)&=& x^3 &\mapsto &{\bf 13,26}.
\end{array}$
\item[{\bf 10.}] $j_{x^3,y^{3k+1} }f(x,y)= x^3+y^{3k+1}$ and $3k+1<p$ $\Rightarrow f\in E_{6k}$.
\item[{\bf 11.}] $j_{x^3,xy^{2k+1} }f(x,y)= x^3+xy^{2k+1}$ and $3k+1<p$ $\Rightarrow f\in E_{6k+1}$.
\item[{\bf 12.}] $j_{x^3,y^{3k+2} }f(x,y)= x^3+y^{3k+2}$ and $3k+2<p$ $\Rightarrow f\in E_{6k+2}$.
\item[{\bf 13.}] $p=5$ and $j_{x^3,y^{5} }f(x,y)= x^3+ay^{5}\Rightarrow $ one of the three possibilities holds:

$\begin{array}{lllll}
j_{x^3,y^{6} }f(x,y)&\sim_r & x^3+bx^2y^2+y^{6}+ay^5,\ 4b^3+27\neq 0&\mapsto &{\bf 14},\\
&\sim_r & x^3+x^2y^2 +ay^5&\mapsto &{\bf 15,16},\\
&\sim_r & x^3 +ay^5&\mapsto &{\bf 17}.
\end{array}$
\item[{\bf 14.}] $p=5$ and $j_{x^3,y^{6} }f= x^3+bx^2y^2+y^{6}+ay^5,\ 4b^3+27\neq 0\Rightarrow f\in J_{2,0}$. 
\item[{\bf 15.}] $p=5,j_{x^3,y^{6} }f= x^3+x^2y^2+ay^5$ and $\mu<14\Rightarrow f\in J_{2,q}$ with $q=\mu-10>0$.
\item[{\bf 16.}] $p=5,j_{x^3,y^{6} }f= x^3+x^2y^2+ay^5$ and $\mu\geq 14\Rightarrow \mathrm{rmod}(f)\geq 3$.
\item[{\bf 17.}] $p=5,j_{x^3,y^{6} }f= x^3+ay^5\Rightarrow $ one of the two possibilities holds:

$\begin{array}{lllll}
j_{x^3,y^{7} }f(x,y)&\sim_r & x^3+ay^5+y^{7} &\mapsto &{\bf 18},\\
j_{x^3,y^{7} }f(x,y)&=& x^3+ay^5 &\mapsto &{\bf 19}.
\end{array}$
\item[{\bf 18.}] $p=5,j_{x^3,y^{7} }f= x^3+ay^5+y^7\Rightarrow f\in E_{12}$.
\item[{\bf 19.}] $p=5,j_{x^3,y^{7} }f= x^3+ay^5\Rightarrow \mathrm{rmod}(f)\geq 3$.
\item[{\bf 20.}] $p=7$ and $j_{x^3,y^{7} }f= x^3+ay^7\Rightarrow $ one of the three possibilities holds:

$\begin{array}{lllll}
j_{x^3,xy^{5} }f(x,y)&\sim_r & x^3+xy^5+ay^7&\mapsto &{\bf 21},\\
j_{x^3,y^{8} }f(x,y)&\sim_r & x^3+y^8 +ay^7&\mapsto &{\bf 22},\\
j_{x^3,y^{8} }f(x,y)&= & x^3 +ay^7&\mapsto &{\bf 23}.
\end{array}$
\item[{\bf 21.}] $p=7$ and $j_{x^3,xy^{5} }f= x^3+xy^5+ay^7 \Rightarrow f\in E_{13}$. 
\item[{\bf 22.}] $p=7$ and $j_{x^3,y^{8} }f= x^3+y^8+ay^7 \Rightarrow f\in E_{14}$.
\item[{\bf 23.}] $p=7$ and $j_{x^3,y^{8} }f= x^3+ay^7 \Rightarrow \mathrm{rmod}(f)\geq 3$.  
\item[{\bf 24.}] $p=11$ and $j_{x^3,y^{11} }f= x^3+y^{11} \Rightarrow \mathrm{rmod}(f)\geq 3$.  
\item[{\bf 25.}] $j_{x^3,y^{11} }f(x,y)= x^3\Rightarrow f\in \langle x,y^4\rangle^3 \Rightarrow \mathrm{rmod}(f)\geq 3$.
\item[] Through theorems {\bf 26--29}, $k=2,3$.
\item[{\bf 26.}] $j_{x^3,y^{3k-1} }f(x,y)=x^ 3 \Rightarrow $ one of the three possibilities holds:

$\begin{array}{lllll}
j_{x^3,y^{3k} }f(x,y)&\sim_r & x^3+ax^2y^k+y^{3k},\ 4a^3+27\neq 0&\mapsto &{\bf 27},\\
&\sim_r & x^3+x^2y^k &\mapsto &{\bf 28,29},\\
&\sim_r & x^3 &\mapsto &{\bf 9,30}.
\end{array}$
\item[{\bf 27.}] $j_{x^3,y^{3k} }f(x,y)= x^3+ax^2y^k+y^{3k},\ 4a^3+27\neq 0$ and $3k<p$ $\Rightarrow f\in J_{k,0}$
\item[{\bf 28.}] $j_{x^3,y^{3k} }f(x,y)= x^3+x^2y^k, 3k<p$ and $\mu-3k+2<2p\Rightarrow f\in J_{k,q}$ with $q=\mu-6k+2$. 
\item[{\bf 29.}] $j_{x^3,y^{3k} }f(x,y)= x^3+x^2y^k, 3k<p$ and $\mu-3k+2\geq 2p\Rightarrow \mathrm{rmod}(f)\geq 3$.
\item[{\bf 30.}] $p=7$ and $j_{x^3,y^{6} }f(x,y)= x^3+x^2y^2$ and $\mu<18\Rightarrow f\in J_{2,q}$ with $q=\mu-10>0$. 
\end{itemize}

{\bf Series X}
\begin{itemize}
\item[{\bf 31.}] $j^3f=0\Rightarrow $ one of the six possibilities holds:

$\begin{array}{lllll}
j^4f&\sim_r & x^4+ax^2y^2+y^4,\ a^2+4\neq 0 &\mapsto &{\bf 32},\\
&\sim_r & x^4+x^2y^2 &\mapsto &{\bf 33,34},\\
&\sim_r & x^2y^2 &\mapsto &\textbf{35--38},\\
&\sim_r & x^3y &\mapsto &\textbf{39},\\
&\sim_r & x^4 &\mapsto &\textbf{54},\\
&=& 0 &\mapsto &\textbf{71}.
\end{array}$
\item[{\bf 32.}] $j^4(f)=x^4+ax^2y^2+y^4,\ a^2+4\neq 0 \Rightarrow f\in X_9$.
\item[{\bf 33.}] $j^4(f)=x^4+x^2y^2$ and $\mu(f)<2p+5$ $\Rightarrow f\in X_{1,q}$.
\item[{\bf 34.}] $j^4(f)=x^4+x^2y^2$ and $\mu(f)\geq 2p+5$ $\Rightarrow \mathrm{rmod}(f)\geq 3$.
\item[{\bf 35.}] $j^4(f)=x^2y^2\Rightarrow f=f_1\cdot f_2$ with $\mathrm{mt}(f_1)=\mathrm{mt}(f_2)=2$ and $2\leq \mu(f_1)\leq \mu(f_2)\Rightarrow \textbf{36}$.
\item[] Through theorems {\bf 36--39}, $1\leq r:=\mu(f_1)-1\leq s:=\mu(f_2)-1$.
\item[{\bf 36.}] $\mu(f_1)\geq p$ or $\mu(f_2)\geq 2p \Rightarrow \mathrm{rmod}(f)\geq 3$.
\item[{\bf 37.}] $\mu(f_2)< p \Rightarrow \mathrm{rmod}(f)=1 \text{ and } f\in Y_{r,s}$.
\item[{\bf 38.}] $\mu(f_1)< p$ and $p\leq \mu(f_2)< 2p \Rightarrow \mathrm{rmod}(f)=2 \text{ and } f\in Y_{r,s}$.
\item[{\bf 39.}] $j^4(f)=x^3y\Rightarrow j_{x^3y,y^{4}}f =x^3y\mapsto  \textbf{40,44}$.
\end{itemize}

{\bf Series Z}
\begin{itemize}
\item[] Through theorems {\bf 40--43}, $5<p$ and $q=1,2$.
\item[{\bf 40.}] $j_{x^3y,y^{3q+1}}f =x^3y\Rightarrow $ one of the four possibilities holds:

$\begin{array}{lllll}
j_{x^3y,y^{3q+2}}f&\sim_r & x^3y+y^{3q+2} &\mapsto &{\bf 41},\\
j_{x^3y,xy^{2q+2}}f&\sim_r & x^3y+xy^{2q+2} &\mapsto &{\bf 42},\\
j_{x^3y,y^{3q+3}}f&\sim_r & x^3y+y^{3q+3} &\mapsto &\textbf{43},\\
j_{x^3y,y^{3q+3}}f&= & x^3y &\mapsto &\textbf{49--53}.
\end{array}$
\item[{\bf 41.}] $j_{x^3y,y^{3q+2}}f= x^3y+y^{3q+2}, 3q+2<p\Rightarrow   f\in Z_{6q+5}$.
\item[{\bf 42.}] $j_{x^3y,xy^{2q+2}}f= x^3y+xy^{2q+2}, 3q+3<p\Rightarrow   f\in Z_{6q+6}$.
\item[{\bf 43.}] $j_{x^3y,y^{3q+3}}f= x^3y+y^{3q+3}, 3q+3<p\Rightarrow   f\in Z_{6q+7}$.
\item[{\bf 44.}] $p=5$ and $j_{x^3y,y^{4}}f =x^3y\Rightarrow $ one of the three possibilities holds:

$\begin{array}{lllll}
j_{x^3y,xy^{4}}f&\sim_r & x^3y+xy^{4}+ay^{5} &\mapsto &{\bf 45},\\
j_{x^3y,y^{6}}f&\sim_r & x^3y+y^{6}+ay^{5} &\mapsto &\textbf{46},\\
j_{x^3y,y^{6}}f&= & x^3y+ay^{5} &\mapsto &\textbf{47}.
\end{array}$

\item[{\bf 45.}] $p=5$ and $j_{x^3y,xy^{4}}f= x^3y+xy^{4}+ay^{5}\Rightarrow f\in Z_{12}$.
\item[{\bf 46.}] $p=5$ and $j_{x^3y,xy^{4}}f= x^3y+y^{6}+ay^{5}\Rightarrow f\in Z_{13}$.
\item[{\bf 47.}] $p=5$ and $j_{x^3y,xy^{4}}f= x^3y+ay^{5}\Rightarrow \mathrm{rmod}(f)\geq 3$.
\item[{\bf 48.}] $p=7$ and $j_{x^3y,y^{7}}f =x^3y\Rightarrow \mathrm{rmod}(f)\geq 3$. 
\item[{\bf 49.}] $j_{x^3y,y^{6}}f =x^3y\Rightarrow $ one of the three possibilities holds:

$\begin{array}{lllll}
j_{x^3y,y^{7}}f&= & y(x^3+bx^2y^2+y^{9}),\ 4b^3+27\neq 0 &\mapsto &{\bf 50}\\
&= &y(x^3+x^2y^{2}) &\mapsto &{\bf 51,52}\\
&= & x^3y &\mapsto &\textbf{53}.
\end{array}$
\item[{\bf 50.}] $j_{x^3y,y^{7}}f= y(x^3+bx^2y^2+y^{9}),\ 4b^3+27\neq 0\Rightarrow   f\in Z_{1,0}$.
\item[{\bf 51.}] $j_{x^3y,y^{7}}f= y(x^3+x^2y^{2})$ and $\mu-8<p\Rightarrow   f\in Z_{1,r}$ with $r=\mu-15>0$.
\item[{\bf 52.}]$j_{x^3y,y^{7}}f= y(x^3+x^2y^{2})$ and $\mu-8\geq p\Rightarrow \mathrm{rmod}(f)\geq 3$.
\item[{\bf 53.}] $j_{x^3y,y^{9}}f= x^3y \Rightarrow   f\in \langle y\rangle\cdot \langle x,y^3\rangle^3\Rightarrow \mathrm{rmod}(f)\geq 3$.
\end{itemize}

{\bf Series W}
\begin{itemize}
\item[] Through theorems {\bf 54--56}, $5< p$.
\item[{\bf 54.}] $j^4f =x^4\Rightarrow j_{x^4,y^4}f=x^4\Rightarrow $ one of the three possibilities holds:

$\begin{array}{lllll}
j_{x^4,y^{5}}f&\sim_r & x^4+y^{5} &\mapsto &{\bf 55},\\
j_{x^4,xy^{4}}f&\sim_r & x^4+xy^{4} &\mapsto &{\bf 56},\\
j_{x^4,xy^{4}}f&= & x^4 &\mapsto &\textbf{57,60}.
\end{array}$
\item[{\bf 55.}] $j_{x^4,y^{5}}f=x^4+y^{5}\Rightarrow   f\in W_{12}$.
\item[{\bf 56.}] $j_{x^4,xy^{4}}f= x^4+xy^{4}\Rightarrow   f\in W_{13}$.
\item[{\bf 57.}] $p=5$ and $j_{x^4,y^{4}}f=x^4\Rightarrow $
one of the two possibilities holds:

$\begin{array}{lllll}
j_{x^4,xy^{4}}f&\sim_r & x^4+xy^{4}+ay^5 &\mapsto &{\bf 58},\\
j_{x^4,xy^{4}}f&= & x^4+ay^5 &\mapsto &\textbf{59}.
\end{array}$
\item[{\bf 58.}] $p=5$ and $j_{x^4,xy^{4}}f=x^4+xy^{4}+ay^5\Rightarrow   f\in W_{13}$.
\item[{\bf 59.}] $p=5$ and $j_{x^4,xy^{4}}f= x^4+ay^5\Rightarrow   \mathrm{rmod}(f)\geq 3$.
\item[] Through theorems {\bf 60--70}, $5< p$.
\item[{\bf 60.}] $j_{x^4,xy^{4}}f=x^4\Rightarrow $ one of the three possibilities holds:

$\begin{array}{lllll}
j_{x^4,y^{6}}f&\sim_r & x^4+bx^2y^{3}+y^{6},\ b^2\neq 4&\mapsto &{\bf 61},\\
&\sim_r & x^4+x^2y^{3} &\mapsto &{\bf 62,63},\\
&\sim_r & (x^2+y^{3})^2 &\mapsto &{\bf 64,65}\\
&= & x^4 &\mapsto &\textbf{66}.
\end{array}$
\item[{\bf 61.}] $j_{x^4,y^{6}}f= x^4+bx^2y^{3}+y^{6},\ b^2\neq 4\Rightarrow   f\in W_{1,0}$.
\item[{\bf 62.}] $j_{x^4,y^{6}}f= x^4+x^2y^{3}$ and $\mu-8<p\Rightarrow   f\in W_{1,q} (q=\mu-15> 0)$.
\item[{\bf 63.}] $j_{x^4,y^{6}}f= x^4+x^2y^{3}$ and $\mu-8\geq p\Rightarrow\mathrm{rmod}(f)\geq 3$.
\item[{\bf 64.}] $j_{x^4,y^{6}}f= (x^2+y^{3})^2$ and $\mu-8<p\Rightarrow   f\in W^{\sharp}_{k,q} (q=\mu-15> 0)$.
\item[{\bf 65.}] $j_{x^4,y^{6}}f= (x^2+y^{3})^2$ and $\mu-8\geq p\Rightarrow\mathrm{rmod}(f)\geq 3$.
\item[{\bf 66.}] $j_{x^4,y^{6}}f=x^4\Rightarrow $ one of the three possibilities holds:

$\begin{array}{lllll}
j_{x^4,xy^{5}}f&\sim_r & x^4+xy^{5}&\mapsto &{\bf 67},\\
j_{x^4,y^{7}}f&\sim_r & x^4+y^{7} &\mapsto &{\bf 68,69},\\
&= & x^4 &\mapsto &\textbf{70}.
\end{array}$
\item[{\bf 67.}] $j_{x^4,xy^{5}}f= x^4+xy^{5}\Rightarrow   f\in W_{17}$.
\item[{\bf 68.}] $j_{x^4,y^{7}}f= x^4+y^{7}$ and $p>7\Rightarrow   f\in W_{18}$.
\item[{\bf 69.}] $p=7$ and $j_{x^4,y^{7}}f= x^4+y^{7}\Rightarrow \mathrm{rmod}(f)\geq 3$.
\item[{\bf 70.}] $j_{x^4,y^{7}}f=x^4\Rightarrow \mathrm{rmod}(f)\geq 3$.
\item[] Through theorems {\bf 53--55}, $5< p$.
\item[{\bf 71.}] $j^{4}f=0\Rightarrow $ one of the two possibilities holds:

$\begin{array}{lllll}
j_{5}f&\sim_r & x^4y+ax^3y^2+bx^2y^3+xy^{4},\ \Delta\neq 0,\ ab\neq 9&\mapsto &{\bf 54},\\
j_{5}f&\text{ is } & \text{degenerate} &\mapsto &{\bf 55}.
\end{array}$
\item[{\bf 72.}] $j_{5}f= x^4y+ax^3y^2+bx^2y^3+xy^{4},\ \Delta\neq 0,\ ab\neq 9$ $\Rightarrow$  $f\sim_r  x^4y+ax^3y^2+bx^2y^3+xy^{4}+cx^3y^3$ with $\Delta\neq 0,\ ab\neq 9$ and therefore $\mathrm{rmod}(f)\geq 3$.
\item[{\bf 73.}] If $j_{5}f$ is degenerate $\Rightarrow   \mathrm{rmod}(f)\geq 3$.
\end{itemize}

{\bf Corank 3 Singularities}
\begin{itemize}
\item[] Through theorems {\bf 74--120}, $f\in K[[x,y,z]]$.
\item[{\bf 74.}] $j^2f(x,y,z)=0\Rightarrow $ one of the ten possibilities holds:

$\begin{array}{lllll}
j^3f&\sim_r & x^3+y^3+z^3+axyz, a^3 +27\neq 0&\mapsto &{\bf 75},\\
&\sim_r & x^3 + y^3 + xyz &\mapsto &{\bf 76},\\
&\sim_r & x^3+xyz &\mapsto &{\bf 79},\\
&\sim_r & xyz &\mapsto &{\bf 80},\\
&\sim_r & x^3+yz^2 &\mapsto &{\bf 81-91},\\
&\sim_r & x^2z+yz^2 &\mapsto &{\bf 92-106},\\
&\sim_r & x^3+xz^2 &\mapsto &{\bf 107-117},\\
&\sim_r & x^2y &\mapsto &{\bf 118},\\
&\sim_r & x^3 &\mapsto &{\bf 119},\\
&= & 0 &\mapsto &{\bf 120}.
\end{array}$
\end{itemize}

{\bf Series T}
\begin{itemize}
\item[{\bf 75.}] $j^3f(x,y,z)=x^3+y^3+z^3+axyz, a^3 +27\neq 0\Rightarrow f\in P_8$.
\item[{\bf 76.}] $j^3f(x,y,z)=x^3 + y^3 + xyz\Rightarrow f\sim_r x^3 + y^3 + xyz+\alpha(z),\ j^3\alpha=0 \mapsto {\bf 77,78}$.
\item[{\bf 77.}] $f=x^3 + y^3 + xyz+\alpha(z),\ j^3\alpha=0, q:=\mu(\alpha)+1<2p\Rightarrow f\in P_{q+5}=T_{3,3,q}(q>3)$.
\item[{\bf 78.}] $f=x^3 + y^3 + xyz+\alpha(z),\ j^3\alpha=0, \mu(\alpha)+1\geq 2p\Rightarrow \mathrm{rmod}(f)\geq 3$.
\item[{\bf 79.}] $j^3f(x,y,z)= x^3+xyz \Rightarrow f=x^3+xyz+\alpha(y)+\beta(z),\ j^3(\alpha,\beta)=0$ and $q:=\mu(\alpha)+1\leq r:=\mu(\beta)+1$ $\Rightarrow$ one of the three possibilities holds:

$\begin{array}{llll}
(i)&r<p&\Rightarrow &\mathrm{rmod}(f)=1 \text{ and } f\in T_{3,q,r},\\
(ii)&q<p\leq r<2p&\Rightarrow& \mathrm{rmod}(f)=2 \text{ and } f\in T_{3,q,r},\\
(iii) &\text{ otherwise }&\Rightarrow&\mathrm{rmod}(f)\geq 3.
\end{array}$
\item[{\bf 80.}] $j^3f(x,y,z)= xyz\Rightarrow f\sim_r xyz+\alpha(x)+\beta(y)+\gamma(z),\ j^3(\alpha,\beta,\gamma)=0$ and

 $q:=\mu(\alpha)+1\leq r:=\mu(\beta)+1\leq s:=\mu(\gamma)+1$ $\Rightarrow$ one of the three possibilities holds:

$\begin{array}{llll}
(i)&s<p&\Rightarrow &\mathrm{rmod}(f)=1 \text{ and } f\in T_{q,r,s},\\
(ii)&r<p\leq s<2p&\Rightarrow& \mathrm{rmod}(f)=2 \text{ and } f\in T_{q,r,s},\\
(iii) &\text{ otherwise }&\Rightarrow&\mathrm{rmod}(f)\geq 3.
\end{array}$.
\end{itemize}

{\bf Series Q}
\begin{itemize}
\item[] Through theorems {\bf 81--91}, $\varphi=x^3+yz^2, j^{*}_{\lambda}=j_{yz^2,x^3,\lambda},$ ($\lambda$ is a polynomial).
\item[{\bf 81.}] $j^3f=\varphi\Rightarrow f\sim_r \varphi+\alpha(y)+x\beta(y), j^{3}(\alpha,x\beta)=0 \mapsto {\bf 82}$.
\item[] Through theorems {\bf 82--85}, $k=1,2$.
\item[{\bf 82.}] $f=\varphi+\alpha(y)+x\beta(y), j^{*}_{y^{3k}}f=\varphi\Rightarrow $ one of the four possibilities holds:

$\begin{array}{lllll}
j^{*}_{y^{3k+1}} f&\sim_r & \varphi+y^{3k+1}&\mapsto &{\bf 83},\\
j^{*}_{xy^{2k+1}} f&\sim_r & \varphi+xy^{2k+1} &\mapsto &{\bf 84},\\
j^{*}_{y^{3k+2}} f&\sim_r & \varphi+y^{3k+2} &\mapsto &{\bf 85,86},\\
j^{*}_{y^{3k+2}} f&\sim_r & \varphi &\mapsto &{\bf 87}.
\end{array}$
\item[{\bf 83.}] $j^{*}_{y^{3k+1}} f= \varphi+y^{3k+1}$ and $3k+1<p\Rightarrow f\in Q_{6k+4}$.
\item[{\bf 84.}] $j^{*}_{xy^{2k+1}} f= \varphi+xy^{2k+1}$ and $3k+1<p\Rightarrow f\in Q_{6k+5}$.
\item[{\bf 85.}] $j^{*}_{y^{3k+2}} f=  \varphi+y^{3k+2}$ and $3k+2<p\Rightarrow f\in Q_{6k+6}$.
\item[{\bf 86.}] $p=5$ and  $j^{*}_{xy^{3}} f=  \varphi\Rightarrow \mathrm{rmod}(f)\geq 3$.
\item[{\bf 87.}] $f=\varphi+\alpha(y)+x\beta(y), j^{*}_{y^{5}}f=\varphi\Rightarrow $ one of the three possibilities holds:

$\begin{array}{lllll}
j^{*}_{y^{6}} f&\sim_r & \varphi+ax^2y^{2}+xy^{4},a^2\neq 4&\mapsto &{\bf 88},\\
&\sim_r & \varphi+x^2y^{2} &\mapsto &{\bf 89,90},\\
&= & \varphi&\mapsto &{\bf 91}.
\end{array}$
\item[{\bf 88.}] $j^{*}_{y^{6}} f= \varphi+ax^2y^{2}+xy^{4},a^2\neq 4\Rightarrow f\in Q_{2,0}$.
\item[{\bf 89.}] $j^{*}_{y^{6}} f= \varphi+x^2y^{2}$ and $\mu-5<p\Rightarrow f\in Q_{2,q}(q=\mu-12> 0)$.
\item[{\bf 90.}] $j^{*}_{y^{6}} f= \varphi+x^2y^{2}$ and $\mu-5\geq p\Rightarrow \mathrm{rmod}(f)\geq 3$.
\item[{\bf 91.}] $p=7$ and $j^{*}_{y^{6}}f=\varphi\Rightarrow \mathrm{rmod}(f)\geq 3$. 

\end{itemize}

{\bf Series S}
\begin{itemize}
\item[] Through theorems {\bf 92--106}, $\varphi=x^2z+yz^2, j^{*}_{\lambda}=j_{x^2y,yz^2,\lambda},$ ($\lambda$ is a polynomial).
\item[{\bf 92.}] $j^3f=\varphi \Rightarrow f=\varphi+\alpha(y)+x\beta(y)+z\gamma(y), j^{3}(\alpha,x\beta,z\gamma)=0\mapsto {\bf 93}$. 
\item[{\bf 93.}] $f=\varphi+\alpha(y)+x\beta(y)+z\gamma(y), j^{*}_{y^{3}} f = \varphi\Rightarrow $ one of the three possibilities holds: 

$\begin{array}{lllll}
j^{*}_{y^{4}} f&\sim_r& \varphi+y^{4}&\mapsto &{\bf 94},\\
j^{*}_{xy^{3}} f&\sim_r & \varphi+xy^{3} &\mapsto &{\bf 95},\\
j^{*}_{xy^{3}} f&= & \varphi &\mapsto &{\bf 96,106}.
\end{array}$
\item[{\bf 94.}] $j^{*}_{y^{4}} f= \varphi+y^{4}\Rightarrow f\in S_{11}$.
\item[{\bf 95.}] $j^{*}_{xy^{3}} f= \varphi+xy^{3}\Rightarrow f\in S_{12}$.
\item[] Through theorems {\bf 96--105}, $p>5$.
\item[{\bf 96.}] $f=\varphi+\alpha(y)+x\beta(y)+z\gamma(y), j^{*}_{xy^{3}} f = \varphi\Rightarrow $ one of the four possibilities holds: 

$\begin{array}{lllll}
j^{*}_{y^{5}} f&\sim_r& \varphi+y^{5}+bzy^{3}, b^2\neq 4&\mapsto &{\bf 97},\\
&\sim_r & \varphi+x^2y^{2} &\mapsto &{\bf 98,99},\\
&\sim_r & \varphi+zy^{3} &\mapsto &{\bf 100,101},\\
&= & \varphi &\mapsto &{\bf 102}.
\end{array}$
\item[{\bf 97.}] $j^{*}_{y^{5}} f= \varphi+y^{5}+bzy^{3}, b^2\neq 4\Rightarrow f\in S_{1,0}$.
\item[{\bf 98.}] $j^{*}_{y^{5}} f=\varphi+x^2y^{2}$ and $\mu-9<p\Rightarrow f\in S_{1,q}(q:=\mu-14>0)$.
\item[{\bf 99.}] $j^{*}_{y^{5}} f=\varphi+x^2y^{2}$ and $\mu-9\geq p\Rightarrow \mathrm{rmod}(f)\geq 3$.
\item[{\bf 100.}] $j^{*}_{y^{5}} f=\varphi+zy^{3}$ and $\mu-9<p\Rightarrow f\in S^{\sharp}_{1,q}(q:=\mu-14>0)$.
\item[{\bf 101.}] $j^{*}_{y^{5}} f=\varphi+zy^{3}$ and $\mu-9\geq p\Rightarrow \mathrm{rmod}(f)\geq 3$.
\item[{\bf 102.}] $f=\varphi+\alpha(y)+x\beta(y)+z\gamma(y), j^{*}_{y^{5}} f = \varphi\Rightarrow $ one of the three possibilities holds: 

$\begin{array}{lllll}
j^{*}_{xy^{4}} f&\sim_r& \varphi+xy^{4}&\mapsto &{\bf 103},\\
j^{*}_{y^{6}} f&\sim_r& \varphi+y^{6}&\mapsto &{\bf 104},\\
j^{*}_{y^{6}} f&= & \varphi &\mapsto &{\bf 105}.
\end{array}$
\item[{\bf 103.}] $j^{*}_{xy^{4}} f= \varphi+xy^{4}\Rightarrow f\in S_{16}$.
\item[{\bf 104.}] $j^{*}_{y^{6}} f= \varphi+y^{6}\Rightarrow f\in S_{17}$.
\item[{\bf 105.}] $j^{*}_{y^{6}} f=  \varphi\Rightarrow \mathrm{rmod}(f)\geq 3$.
\item[{\bf 106.}] $f=\varphi+\alpha(y)+x\beta(y)+z\gamma(y),\ j^{*}_{xy^{3}} f= \varphi$ and $p=5\Rightarrow \mathrm{rmod}(f)\geq 3$.
\end{itemize}

{\bf Series U}
\begin{itemize}
\item[] Through theorems {\bf 107--117}, $\varphi=x^3+xz^2, j^{*}_{\lambda}=j_{x^3,z^3,\lambda},$ ($\lambda$ is a polynomial).
\item[{\bf 107.}] $j^3f=\varphi \Rightarrow f\sim_r \varphi+\alpha(y)+x\beta(y)+z\gamma(y)+x^2\delta(y), j^3(\alpha,x\beta,z\gamma,x^2\delta)=0 \mapsto {\bf 108}$.
\item[{\bf 108.}] $f= \varphi+\alpha(y)+x\beta(y)+z\gamma(y)+x^2\delta(y), j^{*}_{y^{3k}}f=\varphi\Rightarrow $ one of the two possibilities holds: 

$\begin{array}{lllll}
j^{*}_{y^{4}}f&\sim_r& \varphi+y^{4}&\mapsto &{\bf 109},\\
&= & \varphi &\mapsto &{\bf 110}.
\end{array}$
\item[{\bf 109.}] $j^{*}_{y^{4}} f= \varphi+y^{4}\Rightarrow f\in U_{12}$.
\item[{\bf 110.}] $f= \varphi+\alpha(y)+x\beta(y)+z\gamma(y)+x^2\delta(y), j^{*}_{y^{4}}f=\varphi\Rightarrow $ one of the three possibilities holds: 

$\begin{array}{lllll}
j^{*}_{xy^{3}}f&\sim_r& \varphi+xy^{3}+czy^{3}, c(c^2+1)\neq 0&\mapsto &{\bf 111},\\
&\sim_r& \varphi+xy^{3} &\mapsto &{\bf 112,113},\\
&= & \varphi &\mapsto &{\bf 114}.
\end{array}$
\item[{\bf 111.}] $j^{*}_{xy^{3}} f= \varphi+xy^{3}+czy^{3}, c(c^2+1)\neq 0\Rightarrow f\in U_{1,0}$.
\item[{\bf 112.}] $j^{*}_{xy^{3}} f= \varphi+xy^{3}$ and $\mu-13<p\Rightarrow f\in U_{1,q}(q:=\mu-14\geq 0)$.
\item[{\bf 113.}] $j^{*}_{xy^{3}} f= \varphi+xy^{3}$ and $\mu-13\geq p\Rightarrow \mathrm{rmod}(f)\geq 3$.
\item[{\bf 114.}] $f= \varphi+\alpha(y)+x\beta(y)+z\gamma(y)+x^2\delta(y), j^{*}_{xy^{3}}f=\varphi\Rightarrow $ one of the two possibilities holds: 

$\begin{array}{lllll}
j^{*}_{y^{5}}f&\sim_r& \varphi+y^{5}&\mapsto &{\bf 115,116},\\
&= & \varphi &\mapsto &{\bf 116,117}.
\end{array}$
\item[{\bf 115.}] $j^{*}_{y^{5}} f= \varphi+y^{5}$ and $p>5\Rightarrow f\in U_{16}$.
\item[{\bf 116.}] $p=5, f= \varphi+\alpha(y)+x\beta(y)+z\gamma(y)+x^2\delta(y)$ and $j^{*}_{xy^{3}}f=\varphi\Rightarrow \mathrm{rmod}(f)\geq 3$.
\item[{\bf 117.}] $j^{*}_{y^{5}} f= \varphi\Rightarrow \mathrm{rmod}(f)\geq 3$.
\item[{\bf 118.}] $j^3f=x^2y\Rightarrow f\sim_r x^2y+\alpha(y,z)+x\beta(z)$ and then $\mathrm{rmod}(f)\geq 3$.
\item[{\bf 119.}] $j^3f=x^3\Rightarrow \mathrm{rmod}(f)\geq 4$.
\item[{\bf 120.}] $j^3f=0\Rightarrow \mathrm{rmod}(f)\geq 6$.
\end{itemize}
{\bf Corank $> 3$ Singularities}
\begin{itemize}
\item[{\bf 121.}] $\mathrm{crk(f)}>3\Rightarrow \mathrm{rmod}(f)\geq 4$.
\end{itemize}
\subsection{Singularity determinator in characteristic 2}
\begin{itemize}
\item[{\bf 122.}] $\mu(f)<\infty\Rightarrow $ one of the four possibilities holds:

$\begin{array}{lllll}
\mathrm{crk}(f)&\leq & 1 &\mapsto &{\bf 123},\\
&= & 2 &\mapsto &{\bf 124},\\
&\geq & 3 &\mapsto& {\bf 136}.
\end{array}$
\item[{\bf 123.}] $\mathrm{crk}(f)\leq 1 \Rightarrow f\in A_k\ (1\leq k\leq 5)$.
\item[] Through theorems {\bf }, $f\in K[[x,y]]$.
\item[{\bf 124.}] $\mathrm{crk}(f)=2 \Rightarrow $ one of the four possibilities holds:

$\begin{array}{lllll}
j^3f&\sim_r & ax^2+by^2+x^3+y^3 &\mapsto &{\bf 125},\\
&\sim_r & ax^2+by^2+x^2y &\mapsto &{\bf 126},\\
&\sim_r & ax^2+by^2+x^3 &\mapsto &{\bf 131},\\
&= & ax^2+by^2 &\mapsto &{\bf 134,135}.
\end{array}$
\item[{\bf 125.}] $j^3f=ax^2+by^2+x^3+y^3 \Rightarrow f\in D_4$.
\item[{\bf 126.}] $j^3f=ax^2+by^2+x^2y \Rightarrow$ one of the two possibilities holds:

$\begin{array}{lllll}
j^4f&\sim_r & ax^2+by^2+x^2y+xy^3 &\mapsto &{\bf 127},\\
&=& ax^2+by^2+x^2y &\mapsto &{\bf 128,130}.\\
\end{array}$
\item[{\bf 127.}] $j^4f= ax^2+by^2+x^2y+xy^3\Rightarrow f\in D_6$.
\item[{\bf 128.}] $j^4f=ax^2+by^2+x^2y \Rightarrow$ one of the two possibilities holds:

$\begin{array}{lllll}
j^5f&\sim_r & ax^2+by^2+x^2y+xy^4 &\mapsto &{\bf 129},\\
&=& ax^2+by^2+x^2y &\mapsto &{\bf 130}.\\
\end{array}$
\item[{\bf 129.}] $j^5f=ax^2+by^2+x^2y+xy^4 \Rightarrow$ one of the two possibilities holds:

$\begin{array}{lllll}
j^5f&\sim_r & ax^2+by^2+x^2y+xy^4+cxy^5&\mapsto &{\bf 130},\\
&=& ax^2+by^2+x^2y &\mapsto &{\bf 130}.\\
\end{array}$
\item[{\bf 130.}] $j^4f= ax^2+by^2+x^2y\Rightarrow \mathrm{rmod}(f)\geq 3,\mu(f)\geq 8$.
\item[{\bf 131.}] $j^3f=ax^2+by^2+x^3 \Rightarrow$ one of the two possibilities holds:

$\begin{array}{lllll}
j^4f&\sim_r & ax^2+by^2+x^3+xy^3 &\mapsto &{\bf 132},\\
&=& ax^2+by^2+x^3 &\mapsto &{\bf 133}.\\
\end{array}$
\item[{\bf 132.}] $j^4f= ax^2+by^2+x^3+xy^3 \Rightarrow f\in E_7$.
\item[{\bf 133.}] $j^4f= ax^2+by^2+x^3 \Rightarrow \mathrm{rmod}(f)\geq 3$.
\item[{\bf 134.}] $j^3f=ax^2+by^2, (a,b)\neq (0,0) \Rightarrow$ one of the two possibilities holds:

$\begin{array}{lllll}
j^4f&\sim_r & ax^2+by^2+x^3y &\mapsto &{\bf 135},\\
&=& ax^2+by^2 &\mapsto &{\bf 135}.\\
\end{array}$
\item[{\bf 135.}] $j^3f= ax^2+by^2\Rightarrow \mathrm{rmod}(f)\geq 4,\mu(f)\geq 10$.
\item[{\bf 136.}] $\mathrm{crk}(f)\geq 3\Rightarrow \mathrm{rmod}(f)\geq 4,\mu(f)\geq 8$.
\end{itemize}
\subsection{Singularity determinator in characteristic 3}
\begin{itemize}
\item[{\bf 137.}] $\mu(f)<\infty\Rightarrow $ one of the four possibilities holds:

$\begin{array}{lllll}
\mathrm{crk}(f)&\leq & 1 &\mapsto &{\bf 138},\\
&= & 2 &\mapsto &\textbf{139--146},\\
&= & 3 &\mapsto &\textbf{147--151},\\
&> & 3 &\mapsto &\textbf{152}.
\end{array}$
\item[{\bf 138.}] $\mathrm{crk}(f)\leq 1 \Rightarrow f\in A_k\ (1\leq k\leq 8)$.
\end{itemize}
{\bf Corank 2 Singularities}
\begin{itemize}
\item[] Through theorems {\bf 139--146}, $f\in K[[x,y]]$.
\item[{\bf 139.}] $j^2(f)=0\Rightarrow $ one of the four possibilities holds:

$\begin{array}{lllll}
j^3f&\sim_r & x^2y+\epsilon y^3,\epsilon\in \{0,1\} &\mapsto &{\bf 140},\\
&\sim_r & x^3 &\mapsto &\textbf{145},\\
&=& 0 &\mapsto &{\bf 146}.
\end{array}$
\item[{\bf 140.}] $j^3(f)=x^2y+\epsilon y^3,\epsilon\in \{0,1\} \Rightarrow f\sim_r x^2y+g(y),j^2g=0\mapsto {\bf 141}$.
\item[{\bf 141.}] $j^4f= x^2y+g(y),j^2g=0 \Rightarrow$ one of the three possibilities holds:

$\begin{array}{lllllll}
2&<& \mu(g)&<  & 5&\mapsto &{\bf 142},\\
5&<&\mu(g)& < &8&\mapsto &{\bf 143},\\
8&<&\mu(g)&  &&\mapsto &{\bf 144}.
\end{array}$
\item[{\bf 142.}] $2<\mu(g)<5 \Rightarrow f\in D_{5},D_{6}$.
\item[{\bf 143.}] $5<\mu(g)<8 \Rightarrow f\in D_{8},D_{9}$.
\item[{\bf 144.}] $8<\mu(g) \Rightarrow \mathrm{rmod}(f)\geq 3,\mu(f)\geq 11$.
\item[{\bf 145.}] $j^3(f)=x^3 \Rightarrow \mathrm{rmod}(f)\geq 3,\mu(f)\geq 9$.
\item[{\bf 146.}] $j^3(f)=0 \Rightarrow \mathrm{rmod}(f)\geq 3,\mu(f)\geq 9$.
\end{itemize}
{\bf Corank 3 Singularities}
\begin{itemize}
\item[] Through theorems {\bf 147--151}, $f\in K[[x,y,z]]$.
\item[{\bf 147.}]$j^2f(x,y,z)=0\Rightarrow $ one of the ten possibilities holds:

$\begin{array}{lllll}
j^3f&\sim_r & x^3+ax^2z+z^3+y^2z, a\neq 0&\mapsto &{\bf 148},\\
&\sim_r & x^3+axz^2+z^3+y^2z, a\neq 0&\mapsto &{\bf 149},\\
&\sim_r & x^3 + y^3 + xyz &\mapsto &{\bf 150},\\
&\sim_r & x^3+xyz &\mapsto &{\bf 150},\\
&\sim_r & xyz &\mapsto &{\bf 150},\\
&\sim_r & x^3+yz^2 &\mapsto &{\bf 150},\\
&\sim_r & x^2z+yz^2 &\mapsto &{\bf 150},\\
&\sim_r & x^3+xz^2 &\mapsto &{\bf 150},\\
&\sim_r & x^2y &\mapsto &{\bf 150},\\
&\sim_r & x^3 &\mapsto &{\bf 150},\\
&= & 0 &\mapsto &{\bf 151}.
\end{array}$
\item[{\bf 148.}] $j^3(f)=x^3+ax^2z+z^3+y^2z, a\neq 0\Rightarrow \mathrm{rmod}(f)\geq 4,\mu(f)\geq 11$.
\item[{\bf 149.}] $j^3(f)=x^3+axz^2+z^3+y^2z, a\neq 0\Rightarrow \mathrm{rmod}(f)\geq 4,\mu(f)\geq 11$.
\item[{\bf 150.}] $j^3(f)$ is degenerate $\Rightarrow \mathrm{rmod}(f)\geq 4,\mu(f)\geq 11$.
\item[{\bf 151.}] $j^3f=0\Rightarrow \mathrm{rmod}(f)\geq 6$.
\end{itemize}
{\bf Corank $> 3$ Singularities}
\begin{itemize}
\item[{\bf 152.}] $\mathrm{crk(f)}>3\Rightarrow \mathrm{rmod}(f)\geq 4$.
\end{itemize}
\section{Proof of the main results}\label{sec5}
We first use the splitting lemma to reduce the number of variables. Namely, if $f\in \mathfrak{m}^2\subset K[[{\bf x}]]$ has corank $\mathrm{crk}(f)=k\geq 0$, then 
$$f\sim_r g(x_1,\ldots, x_k)+Q(x_{k+1},\ldots,x_{n})$$ 
with $g\in \mathfrak{m}^3$ and $Q$ is a nondegenerate quadratic singularity (cf. \cite[Lemma 3.9, 3.12]{GN14}). One has moreover that $\mathrm{rmod}(f)$ in $K[[{\bf x}]]$ is equal to $\mathrm{rmod}(g)$ in $K[[x_1,\ldots, x_k]]$, cf. \cite[Lemma 3.11, 3.13]{GN14}.

Theorems {\bf 1}, {\bf 92}, {\bf 122}, {\bf 137} and {\bf 141} are obvious. Theorems {\bf 9}, {\bf 17}, {\bf 20}, {\bf 25}, {\bf 39}, {\bf 40}, {\bf 44}, {\bf 54}, {\bf 57}, {\bf 66}, {\bf 82}, {\bf 93}, {\bf 102}, {\bf 108}, {\bf 114} are proved by the Newton method \cite{New40} of a moving ruler (line, plane). This method reduces the proof to the counting of the integer points in triangles resp. polyhedrones on the exponent plane (resp. in the space). 

{\em Theorems concerning the geometrical classification problems}: The proofs of theorems {\bf 4}, {\bf 13}, {\bf 26}, {\bf 31}, {\bf 49}, {\bf 60}, {\bf 72}, {\bf 74}, {\bf 96}, {\bf 110}, {\bf 124}, {\bf 139}, {\bf 147} can be reduced to the classifications of orbits of the actions of some quasihomogenous diffeomorphism groups on the spaces of quasihomenous polynomials, see Section \ref{sec5.1} for a proof of Theorem {\bf 147}.

{\em Theorems on normal forms}: Theorems {\bf 2}, {\bf 123}, {\bf 138} follow from \cite[Thm 2.11]{Ng14}. The proofs of theorems {\bf 5}, {\bf 6}, {\bf 7}, {\bf 10}, {\bf 11}, {\bf 12}, {\bf 14}, {\bf 15}, {\bf 18}, {\bf 21}, {\bf 22}, {\bf 27}, {\bf 28}, {\bf 30}, {\bf 32}, {\bf 33}, {\bf 37}, {\bf 38}, {\bf 41}, {\bf 42},{\bf 43}, {\bf 45}, {\bf 46}, {\bf 50}, {\bf 51}, {\bf 55}, {\bf 56}, {\bf 58}, {\bf 61}, {\bf 62}, {\bf 64}, {\bf 67}, {\bf 68}, {\bf 72}, {\bf 75}, {\bf 76}, {\bf 77}, {\bf 79}, {\bf 80}, {\bf 81}, {\bf 83}, {\bf 84}, {\bf 85}, {\bf 88}, {\bf 89}, {\bf 94}, {\bf 95}, {\bf 97}, {\bf 98}, {\bf 100}, {\bf 103}, {\bf 104}, {\bf 107}, {\bf 109}, {\bf 111}, {\bf 112}, {\bf 115}, {\bf 125}, {\bf 127}, {\bf 132}, {\bf 142}, {\bf 143} are based on the techniques introduced in \cite{Arn72} and generalized in \cite{BGM12}, see Section \ref{sec5.2} for a proof of Theorem {\bf 14}. 

{\em Theorems on low bound of modality}: Theorems {\bf 121}, {\bf 151}, {\bf 152} are consequences of \cite[Prop. 2.18]{GN14}. Theorems {\bf 3}, {\bf 8}, {\bf 16}, {\bf 19}, {\bf 23}, {\bf 24}, {\bf 25}, {\bf 29}, {\bf 34}, {\bf 36}, {\bf 47}, {\bf 48}, {\bf 52}, {\bf 53}, {\bf 59}, {\bf 63}, {\bf 65}, {\bf 69}, {\bf 70}, {\bf 72}, {\bf 73}, {\bf 78}, {\bf 79}(iii), {\bf 80}(iii), {\bf 86}, {\bf 90}, {\bf 99}, {\bf 105}, {\bf 106}, {\bf 113}, {\bf 116--120}, {\bf 130}, {\bf 133}, {\bf 135}, {\bf 136}, {\bf 144}, {\bf 145}, {\bf 146}, {\bf 148}, {\bf 149}, {\bf 150} are proved by using the theory in \cite{GN14} (\cite{Ng13}), see Section \ref{sec5.3} for a proof of {\bf 25} and {\bf 70}.

{\em Theorems on adjacencies}: Theorem \ref{thm35} is proved inductively by applying Theorems {\bf 1}, {\bf 2}, {\bf 4--7}, {\bf 8--15}, {\bf 17}, {\bf 18}, {\bf 20}, {\bf 21}, {\bf 22}, {\bf 26}, {\bf 27}, {\bf 28}, {\bf 30--33}, {\bf 35}, {\bf 37--46}, {\bf 49--51}, {\bf 54--58}, {\bf 60--62}, {\bf 64}, {\bf 66--68}, {\bf 74--77}, {\bf 79--85}, {\bf 87--89}, {\bf 91--98}, {\bf 100--104}, {\bf 107--112}, {\bf 114--117}, {\bf 122--127}, {\bf 131}, {\bf 132}, {\bf 137--143}.

{\em Classification of unimodal and bimodal singularities (Theorems \ref{thm21}-\ref{thm24})}: Applying Theorems {\bf 1-153} and the spliting lemma (cf. \cite{GN14}) we obtain the list of families of singularities in Tables 1--11. The modularity of these families follows from simple caculations. To prove these singularities are unimodal resp. bimodal we use the theory of modality in \cite{GN14}. See Section \ref{sec5.5} for a proof that $E_{12}$ with $p>7$, is a class of unimodal singularities.



\subsection{Proof of Theorem {\bf 147}}\label{sec5.1} 
The theorem is obtained by combining the following lemmas (\ref{lm51}, \ref{lm52}, \ref{lm53}). Let $0\neq f\in K[x,y,z]$, with $\mathrm{char}(K)=3$, be a homogeneous polynomial of degree $3$. 
\begin{lemma}\label{lm51}
If $f$ is nonsingular, then $f$ is right equivalent to one of the following forms
$$x^3+ax^2z+z^3+y^2z,a\neq 0,\ x^3+axz^2+z^3+y^2z,a\neq 0.$$
\end{lemma}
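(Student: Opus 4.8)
The plan is to read the statement geometrically: $f$ is a smooth plane cubic, so $C:=V(f)\subset\mathbb{P}^2$ is an elliptic curve, and two cubic forms are right equivalent precisely when they differ by an element of $\mathrm{GL}_3(K)$ acting linearly on $x,y,z$ (the problem is thus the projective classification of smooth cubics, up to scaling the form). The first step is to produce a distinguished point to normalise against, namely a flex. Fixing any $K$-point $P\in C$ as origin of the group law, a point $Q$ is a flex iff $3Q\sim\mathcal{O}_C(1)$, which translates into $3R=c$ in $C$ for $R=Q-P$ and a fixed class $c\in C$. Since multiplication by $3$ is a nonzero isogeny $[3]\colon C\to C$, it is surjective on $K$-points, so a flex exists (in every characteristic; in characteristic $3$ there are $3$ flexes if $C$ is ordinary and $1$ if $C$ is supersingular, but existence is all we need here).

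After a projective change of coordinates I would move a flex to $O=[0:1:0]$ and its tangent to the line $\{z=0\}$. Since the tangent at a flex meets $C$ only at $O$ with multiplicity $3$, the restriction $f|_{z=0}$ is a cube of a linear form vanishing at $O$, i.e. proportional to $x^3$; after scaling, $f=x^3+z\,Q(x,y,z)$ for a quadratic form $Q$. Smoothness at $O$ forces the coefficient of $y^2z$ to be nonzero, so after scaling $y$ the term $y^2z$ appears with coefficient $1$. Because $\mathrm{char}\,K\neq 2$, I can complete the square in $y$, substituting $y\mapsto y-\tfrac12(\beta x+\epsilon z)$ to remove the $xy$ and $yz$ cross terms, which brings $f$ to the Weierstrass-type shape $f=x^3+y^2z+\alpha x^2z+\delta xz^2+\zeta z^3$.

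The heart of the argument is the characteristic-$3$ phenomenon that blocks the usual reduction to short Weierstrass form. The only coordinate changes left preserving this shape are $x\mapsto x+tz$ together with diagonal scalings, and since $(x+tz)^3=x^3+t^3z^3$ in characteristic $3$, the substitution $x\mapsto x+tz$ leaves the $x^2z$-coefficient $\alpha$ untouched, only altering the $xz^2$- and $z^3$-coefficients. This forces a case split. If $\alpha\neq 0$ one chooses $t$ to kill the $xz^2$-term; smoothness (a short check that otherwise a singular point appears at $[0:0:1]$) forces the $z^3$-coefficient to be nonzero, and diagonal scaling normalises $f$ to $x^3+a x^2z+z^3+y^2z$ with $a\neq 0$. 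If $\alpha=0$, then smoothness forces the $xz^2$-coefficient to be nonzero (otherwise $\partial_x f\equiv 0$ and $C$ is singular), and scaling yields $x^3+a xz^2+z^3+y^2z$ with $a\neq 0$. These are exactly the two families, and the dichotomy $\alpha\neq 0$ versus $\alpha=0$ corresponds to $C$ being ordinary versus supersingular.

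I expect the main obstacle to be the careful bookkeeping in the last step: verifying that smoothness of $C$ is equivalent to the stated non-vanishing conditions (so that precisely these two cases occur and no further degeneration slips through), and checking that the residual scaling freedom lands the form exactly on the two displayed normal forms. The conceptual content—existence of a flex via surjectivity of $[3]$, and the collapse $(x+tz)^3=x^3+t^3z^3$ in characteristic $3$—is clean; the work is in the elementary but error-prone reductions, which must use $\mathrm{char}\,K\neq 2$ for the completion of the square and $\mathrm{char}\,K=3$ for the failure of the completion of the cube.
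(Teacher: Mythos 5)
The paper gives no argument of its own here: its ``proof'' is a citation to \cite[Chap.~II, Prop.~1.2]{Mil06}, the standard normal forms of elliptic curves in characteristic $3$. Your proposal reconstructs exactly the proof behind that citation: reduce right equivalence of cubic forms to linear equivalence, produce a flex, pass to Weierstrass shape using $\mathrm{char}\,K\neq 2$, and split according to the $x^2z$-coefficient $\alpha$ because $(x+tz)^3=x^3+t^3z^3$ blocks completion of the cube. Your flex-existence argument via surjectivity of $[3]$ is a correct replacement for the Hessian criterion (which degenerates in characteristic $3$); note that $[3]$ is inseparable here, but any nonzero isogeny is still surjective on $K$-points over an algebraically closed field, so the argument stands. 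The case $\alpha\neq 0$ is also complete as you sketch it: $t=\delta/\alpha$ kills the $xz^2$-term, and smoothness forces the $z^3$-coefficient to be nonzero (else $x^2(x+\alpha z)+y^2z$ is singular at $[0:0:1]$).

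There is, however, one genuine misstep in the case $\alpha=0$. You claim smoothness forces the $xz^2$-coefficient $\delta\neq 0$ (true: if $\delta=0$ then $f=x^3+\zeta z^3+y^2z$ has $f_x\equiv 0$ and is singular at the triple zero of $(x+\zeta^{1/3}z)^3$ on the line $y=0$), and then that ``scaling yields $x^3+axz^2+z^3+y^2z$.'' But smoothness does \emph{not} force the $z^3$-coefficient $\zeta$ to be nonzero: $f=x^3+xz^2+y^2z$ is a smooth cubic in characteristic $3$ (here $f_x=z^2$ forces $z=0$, then $f=0$ forces $x=0$, and $f_z=y^2\neq 0$ at $[0:1:0]$), and diagonal scalings act on $\zeta$ multiplicatively, so they cannot create a $z^3$-term from $\zeta=0$. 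Your reduction stalls precisely on this subfamily. The repair is one line, using the same substitution you exploited when $\alpha\neq 0$: for $\alpha=0$, the change $x\mapsto x+tz$ fixes $\delta$ and sends $\zeta\mapsto \zeta+\delta t+t^3$, and since $t\mapsto t^3+\delta t$ is a nonconstant (indeed additive, $\mathbb{F}_3$-linear) polynomial map on the algebraically closed field $K$, it is surjective; choose $t$ with $\zeta+\delta t+t^3\neq 0$ and then scale to reach $x^3+axz^2+z^3+y^2z$ with $a\neq 0$. With that insertion your proof is complete and matches the content of the result the paper cites.
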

\begin{proof}
cf. \cite[Chap. II, Prop.1.2]{Mil06}  
\end{proof}
\begin{lemma}\label{lm52}
If $f$ is singular in $\mathbb P^2_K$ and irreducible, then it is right equivalent to one of the following forms
$$x^3+y^3+xyz,\ x^3+y^2z.$$
\end{lemma}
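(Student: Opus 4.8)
The plan is to classify, up to right equivalence (i.e. projective change of coordinates on $\mathbb P^2_K$), the singular but irreducible cubic curves in characteristic $3$. The starting observation is that an irreducible plane cubic that is singular has exactly one singular point, and that singular point is either a node (an ordinary double point with two distinct tangent directions) or a cusp (a double point with a single tangent direction). This dichotomy is the organizing principle: I expect the nodal case to normalize to $x^3+y^3+xyz$ and the cuspidal case to $x^3+y^2z$. Since we work up to the full projective linear group $\mathrm{PGL}_3(K)$ acting on the space of cubics, the task reduces to bringing the singular point to a fixed location and then normalizing the local and global data by the remaining stabilizer.

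First I would move the singular point to $[0:0:1]$, so that $f$ has no term divisible by $z^2$ and no $z^3$ term; writing $f = z\cdot q_2(x,y) + c_3(x,y)$ where $q_2$ is the quadratic part (the tangent cone at the singular point, up to the factor $z$) and $c_3$ is a binary cubic in $x,y$. Irreducibility of $f$ forces $q_2 \neq 0$ and, more importantly, $\gcd(q_2,c_3)=1$ in $K[x,y]$: a common linear factor $\ell$ would split off $\ell$ from $f$ after adjusting, contradicting irreducibility. The tangent cone $q_2$ governs the node/cusp distinction. In the node case $q_2$ factors into two distinct linear forms; using the stabilizer of $[0:0:1]$ (the linear changes in $x,y$ together with additions of multiples of $x,y$ into $z$) I would bring $q_2$ to $xy$, so $f = xyz + c_3(x,y)$, and then use the residual torus and the linear action on the binary cubic $c_3$ to normalize $c_3$ to $x^3+y^3$; here one must check that in characteristic $3$ the relevant binary cubic can still be diagonalized to $x^3+y^3$, which holds because $\gcd(xy, c_3)=1$ means $c_3$ has nonzero $x^3$ and $y^3$ coefficients and no repeated root shared with $xy$. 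In the cusp case $q_2$ is a perfect square, normalized to $y^2$, giving $f = y^2 z + c_3(x,y)$; the condition $\gcd(y^2,c_3)=1$ forces a nonzero $x^3$ term, and after scaling and absorbing lower terms one reaches $x^3 + y^2 z$.

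The step I expect to be the main obstacle is the characteristic-$3$ subtlety in the cuspidal normalization. In characteristic zero or $p>3$ one freely completes cubes and eliminates the mixed monomials $x^2y, xy^2$ from $c_3$ by translations and by absorbing terms into $y^2z$, but in characteristic $3$ the derivative of $x^3$ vanishes and Tschirnhaus-type substitutions behave differently, so I would have to argue directly that the available coordinate changes (translations $x\mapsto x+\lambda y$, $z\mapsto z+(\text{quadratic in }x,y)$, and scalings) suffice to kill the unwanted monomials and land exactly on $x^3+y^2z$ rather than on some genuinely inequivalent cuspidal form. I would verify this by computing how each monomial of $c_3$ transforms under the stabilizer of the flag $([0:0:1], \{y=0\})$ and checking that the $x^2y$ and $xy^2$ coefficients can be cleared using additions into $z$ of the form $x^2, xy$ times the factor multiplying $z$. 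Once both normal forms are reached, I would confirm irreducibility of each (so they genuinely occur) and that $x^3+y^3+xyz$ and $x^3+y^2z$ are not right equivalent, the former being nodal and the latter cuspidal, an invariant of the local singularity type preserved by linear coordinate change. This completes the dichotomy and establishes the lemma.
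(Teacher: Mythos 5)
Your nodal branch is fine: after moving the singular point to $[0:0:1]$ and the tangent cone to $xy$, the substitution $z\mapsto z-bx-cy$ (which \emph{is} linear) absorbs the mixed monomials $x^2y,xy^2$ of $c_3$ into $xyz$, and scaling gives $x^3+y^3+xyz$; this works in characteristic $3$ exactly as you say. But the cuspidal branch --- the very step you flagged as the main obstacle --- contains a genuine gap, and your proposed repair does not work. Your mechanism for clearing $x^2y$, ``additions into $z$ of the form $x^2,xy$ times the factor multiplying $z$,'' i.e.\ $z\mapsto z+q(x,y)$ with $q$ quadratic, is not a linear projective change of coordinates; and if you instead invoke formal right equivalence, such a substitution changes $y^2z$ only in order $\geq 4$, while for a homogeneous cubic right equivalence is detected on the $3$-jet, since $j^3(\Phi(f))=f\circ\Phi_1$ with $\Phi_1$ the linear part of $\Phi$. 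So only linear substitutions are available. Now run the stabilizer computation you propose for $f=x^3+bx^2y+y^2z$: any equivalence must fix the singular point $[0:0:1]$ and the tangent line $\{y=0\}$, hence has the form $x\mapsto \alpha x+\beta y$, $y\mapsto \delta y$, $z\mapsto \lambda x+\mu y+\nu z$ with $\alpha\delta\nu\neq 0$. In characteristic $3$ one has $(\alpha x+\beta y)^3=\alpha^3x^3+\beta^3y^3$ (the Frobenius identity --- exactly the failure of the Tschirnhaus substitution you anticipated), so the $x^2y$-coefficient of the image is $b\alpha^2\delta$, which vanishes if and only if $b=0$; the parameters $\lambda,\mu$ clear only $xy^2$ and $y^3$.

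Consequently the cuspidal case splits into \emph{two} orbits: $x^3+y^2z$ (for $b=0$) and $x^3+x^2y+y^2z$ (all $b\neq 0$ are equivalent, by choosing $\delta$). The latter is irreducible (it is linear in $z$ and $\gcd(y^2,\,x^2(x+y))=1$) and is singular exactly at $[0:0:1]$ with unibranch local equation $y^2+x^2y+x^3$, so it is equivalent to neither of your two normal forms. Geometrically the two cuspidal classes are distinguished by strangeness: parametrizing $x^3+y^2z=0$ by $[t:1:-t^3]$, the tangent direction is constantly $(1,0,0)$ because $3t^2=0$, so all tangents pass through $[1:0:0]$, whereas the tangents of $x^3+x^2y+y^2z$ are not concurrent. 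Thus the verification you promise (``that the available coordinate changes suffice to land exactly on $x^3+y^2z$'') would fail if actually carried out, and your concluding dichotomy node/cusp is too coarse in characteristic $3$. For comparison, the paper's own proof reaches $y^2z+h(x,y)$ by the same localization at the singular point (via a B\'ezout argument rather than your genus/multiplicity remark) and then disposes of the normalization with the phrase ``simple calculations,'' which passes over precisely this characteristic-$3$ subtlety; your instinct that this was the critical step was sound, but the proposal resolves it incorrectly rather than exposing the extra orbit.
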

\begin{proof}
Let $C$ be the curve in $\mathbb P^2$ defined by $f$. Take $P\in \mathrm{Sing}(C)$ and $P\neq Q\in C$. Let $L$ be the line in $\mathbb P^2$ connecting $P,Q$. Applying B\'ezout theorem we obtain that
\begin{eqnarray*}
3=\deg(C)\cdot \deg(L)\geq \mathrm{mt}_P(C)+\mathrm{mt}_Q(C).
\end{eqnarray*}
Hence $\mathrm{mt}_P(C)=2$ and $\mathrm{mt}_Q(C)=1$. We may assume $P=(0:0:1)$ and set $g(x,y):=f(x,y,1)$. Then $\mathrm{mt}(g)=2$ since $\mathrm{mt}_P(C)=2$. It yields that $g$ is right equivalent to one of the following forms
$$xy+h(x,y),\ y^2+h(x,y)$$
with $h(x,y)$ is a homogeneous polynomial of degree $3$. That is, $f$ is right equivalent to either
$$xyz+h(x,y) \text{ or } y^2z+h(x,y).$$
It hence follows by simple calculations that $f$ is right equivalent to one of the two forms
$$x^3+y^3+xyz,\ x^3+y^2z.$$
\end{proof}
\begin{lemma}\label{lm53}
If $f$ is reducible, then it is right equivalent to one of the following forms
$$x^3,\ x^2y,\ x^2z+yz^2,\ x^3+xyz,\ x^3+xz^2,\ xyz.$$
\end{lemma}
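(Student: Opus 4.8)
The plan is to classify $f$ up to linear changes of coordinates, that is, under the action of $\mathrm{GL}_3(K)$ on ternary cubic forms; since a linear substitution is a special case of a right equivalence, it is enough to produce a linear $\Phi$ carrying $f$ to one of the six listed forms. As $f$ is reducible I write $f=L\cdot Q$ with $L$ a nonzero linear form and $Q$ a quadratic form, and split into two cases according to whether $Q$ is irreducible. Throughout I work with the plane cubic $C=V(f)\subset\mathbb P^2_K$, whose qualitative geometry (number and coincidence of the line components, concurrency, tangency to a conic) is a projective invariant and hence detects the $\mathrm{GL}_3$-orbit.

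Suppose first that $Q$ is irreducible, so $V(Q)$ is a smooth conic. Since $\mathrm{char}(K)\neq 2$ and $K$ is algebraically closed, every smooth conic is $\mathrm{GL}_3$-equivalent to $x^2+yz$, so I may assume $Q=x^2+yz$. The line $V(L)$ meets this conic in a degree-$2$ divisor, hence either in two distinct points (secant) or in one point counted twice (tangent). The stabiliser of $Q$ in $\mathrm{PGL}_3(K)$ acts on the conic $\cong\mathbb P^1$ as the full group $\mathrm{PGL}_2(K)$, which is transitive on points and $2$-transitive on pairs of distinct points. Thus in the tangent case I move the contact point to $(0:1:0)$ and $L$ to $z$, obtaining $z(x^2+yz)=x^2z+yz^2$; in the secant case I move the two intersection points to $(0:1:0)$ and $(0:0:1)$ and $L$ to $x$, obtaining $x(x^2+yz)=x^3+xyz$.

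Suppose instead that $Q$ is reducible, so $f$ is a product of three linear forms and $C$ is a union of projective lines. If the three lines coincide, $f=L^3\sim_r x^3$. If exactly two coincide, $f=L^2M$ with $L,M$ independent, and the normalisation $L\mapsto x$, $M\mapsto y$ gives $x^2y$. If the three lines are distinct, they are either concurrent or not. When they are not concurrent they are in general position, span $\mathbb P^2$, and may be taken to be the three coordinate lines, giving $xyz$. When they are concurrent I place the common point at $(0:1:0)$, so that $f$ becomes a product of three distinct linear forms in $x,z$ alone; since $\mathrm{PGL}_2(K)$ is (sharply) $3$-transitive on $\mathbb P^1$, the three distinct points of the corresponding pencil can be carried to the three roots of $x^3+xz^2=x(x^2+z^2)$, yielding the form $x^3+xz^2$.

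The main point to watch is that characteristic $3$ does not corrupt the dictionary between forms and their zero loci. The quadratic-form normalisation uses only $\mathrm{char}(K)\neq 2$, which holds; and the factorisation $x^2+z^2=(x+\zeta z)(x-\zeta z)$ with $\zeta^2=-1$ is legitimate because $-1$ is a square in the algebraically closed field $K$, so $x^3+xz^2$ genuinely consists of three distinct concurrent lines rather than a line plus a smooth conic, which is precisely what distinguishes it from $x^3+xyz$. The remaining work, namely verifying the transitivity statements for $\mathrm{PGL}_2$ acting on the conic and on the pencil and checking that each normalisation lands exactly on the stated monomial form, is routine; the substance of the proof lies in the correct organisation of the configuration cases rather than in any single hard computation.
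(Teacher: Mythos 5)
Your proof is correct and takes essentially the same route as the paper: both factor $f$ as (linear form)$\times$(quadratic form), normalize the quadratic to the same representatives — the paper via the splitting lemma giving $ax^2+byz$ with $a,b\in\{0,1\}$, you via the classification of quadratic forms in characteristic $\neq 2$ — and then distinguish the identical configurations (tangent versus secant line to the smooth conic $x^2+yz$, and coincident, concurrent, or triangle arrangements when $f$ is a product of three lines). Your appeals to $\mathrm{PGL}_2$-transitivity on the conic and sharp $3$-transitivity on the pencil of concurrent lines simply make explicit what the paper leaves as ``simple calculations,'' including the correct characteristic-$3$ observation that $x^3+xz^2=x(x^2+z^2)$ is three distinct concurrent lines because $-1$ is a square in $K$.
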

\begin{proof}
Let $f=g_1\cdot g_2$ with $\mathrm{mt}(g_1)=1,\ \mathrm{mt}(g_2)=2$. By the splitting lemma (cf. \cite{GN14})
$$g_2\sim_r ax^2+byz$$
with $a,b\in \{0,1\}$. That is $f\sim_r g_1\cdot (ax^2+byz)$. Consider the following cases:\\
$\bullet$ $a=1, b=0$: Then $f$ is right equivalent to $x^3$ or $x^2y$.\\
$\bullet$ $a=1, b=1$: Then $f\sim_r g_1\cdot (x^2+yz)$. Without loss of generality we may assume moreover that 
$$\{(0:1:0)\}\in \{g_1=0\}\cap \{x^2+yz=0\},$$ i.e. $g_1$ has the form $g_1=\alpha x+\beta z$.
\begin{itemize}
\item[-] If $\alpha = 0$, then $f\sim_r z(x^2+yz)$,
\item[-] if $\alpha\neq 0$, then $f\sim x(x^2+yz)$. 
\end{itemize}
$\bullet$ $a=0, b=1$: Then $f\sim_r g_1\cdot yz$. It yields that $f$ is right equivalent to one of the forms
$$y^2z,\ xyz,\ (y+z)yz.$$
Hence $f$ is right equivalent to one of the forms: $x^2y,\ xyz,\ x^3+xz^2$.
\end{proof}
\subsection{Proof of Theorem ${\bf 14}$}\label{sec5.2}
Let $f\in K[[x,y]]$ with $p=\mathrm{char}(K)=5$ and $j_{x^3,y^6}f=x^3+bx^2y^2+y^6+ay^5$. We will show that 
$f$ is right equivalent to $f_0:=x^3+bx^2y^2+y^6+ay^5$, i.e. $f$ is of type $J_{2,0}$.

In fact, put $g:=f-ay^5$, then $j_{x^3,y^6}g=x^3+bx^2y^2+y^6$. Applying \cite[Thm. 4.4]{BGM11} we obtain that 
$g\sim_r x^3+bx^2y^2+y^6$. We can see moreover that there exists a coordinate change of the form
$$x\mapsto x+\varphi_1(x,y), y\mapsto y+\varphi_2(x,y)$$
with $\mathrm{mt}(\varphi_i)\geq 2$ such that 
$$g(x+\varphi_1,y+\varphi_2)=x^3+bx^2y^2+y^6.$$
It yields
$$f_1:=f(x+\varphi_1,y+\varphi_2)=x^3+bx^2y^2+y^6+a (y+\varphi_2)^5=x^3+bx^2y^2+y^6+ay^5+a\varphi_2^5.$$ 
It is easy to see that $\mathfrak{m}^7\subset \mathfrak{m}^2\cdot j(f_1)$. By \cite[Thm. 2.1]{BGM12}, $f_1$ is right 9-determined and hence $f_1\sim_r f_0$ since $\mathrm{mt}(\varphi_2^5)\geq 10$. This completes the proof.
\subsection{Proof of theorems on lower bound of modality}\label{sec5.3}
For the proof of these theorems we need the following lemma which is deduced from Corollaries A.4, A.9, A.10 of \cite{GN14} (see \cite[Prop. 3.2.4, Cor. 3.3.4 and Cor. 3.3.6]{Ng13} for more details).
\begin{lemma}\label{lm5.3.1}
Let the algebraic groups $G$ resp. $G'$ act on the varieties $X$ resp. $X'$. Let 
$h: Y\to X$ a morphism  of varieties and let $h':Y\to X'$ an open morphism such that 
\begin{equation}\label{eq5.3.1}
h^{-1}(G\cdot h(y))\subset h'^{-1}(G'\cdot h' (y)), \forall y\in Y.
\end{equation}
Then for all $y\in Y$ we have
$$G\text{-}\mathrm{mod}(h(y))\geq G'\text{-}\mathrm{mod}(h'(y))\geq \dim X'-\dim G'.$$
\end{lemma}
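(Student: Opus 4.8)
The plan is to establish the two inequalities separately, working directly from the two descriptions of modality recalled in \S\ref{sec21}: that $G\text{-}\mathrm{mod}(x)$ is the \emph{minimum}, over open neighbourhoods $U$ of $x$, of the \emph{maximum}, over the strata $X_i$, of $\dim p_i(U\cap X_i)$. Throughout I fix Rosenlicht stratifications $\{(X_i,p_i)\}$ of $X$ and $\{(X'_j,p'_j)\}$ of $X'$, and I use repeatedly that the fibres of a geometric quotient $p_i$ (resp.\ $p'_j$) over a point are exactly the $G$-orbits (resp.\ $G'$-orbits) lying in that stratum.

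For the second inequality I would argue as follows. Let $X'_1$ be the open dense stratum of $X'$. For any open neighbourhood $U'$ of $h'(y)$ the set $U'\cap X'_1$ is a nonempty open subset of $X'$, hence of dimension $\dim X'$; since the fibres of $p'_1$ are $G'$-orbits, of dimension at most $\dim G'$, the fibre-dimension theorem gives $\dim p'_1(U'\cap X'_1)\geq \dim X'-\dim G'$. Taking the maximum over strata and then the minimum over $U'$ yields $G'\text{-}\mathrm{mod}(h'(y))\geq \dim X'-\dim G'$.

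For the main inequality it suffices to show that for \emph{every} open neighbourhood $U$ of $h(y)$ one has $G\text{-}\mathrm{mod}(U)\geq G'\text{-}\mathrm{mod}(h'(y))$, since taking the minimum over $U$ then gives the claim. Given $U$, I set $V:=h^{-1}(U)$, an open neighbourhood of $y$, and $U':=h'(V)$. This is the one and only place where openness of $h'$ enters: it guarantees that $U'$ is again an \emph{open} neighbourhood of $h'(y)$, so that $G'\text{-}\mathrm{mod}(U')\geq G'\text{-}\mathrm{mod}(h'(y))$, and it reduces the problem to proving $G\text{-}\mathrm{mod}(U)\geq G'\text{-}\mathrm{mod}(U')$. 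Choose a stratum $X'_j$ realizing $d':=G'\text{-}\mathrm{mod}(U')=\dim p'_j(U'\cap X'_j)$, put $B':=U'\cap X'_j$ and $C:=V\cap h'^{-1}(X'_j)$; since $U'=h'(V)$ one checks directly that $h'(C)=B'$, so the morphism $\psi:=p'_j\circ h'|_C\colon C\to p'_j(B')$ is surjective with $\dim \psi(C)=d'$. Covering $C$ by the locally closed pieces $C_i:=C\cap h^{-1}(X_i)$ and using that there are finitely many strata, I pick $i$ with $\dim\psi(C_i)=d'$. The hypothesis \eqref{eq5.3.1} now does the work: if $y_1,y_2\in C_i$ have the same image under $p_i\circ h$, then $h(y_1),h(y_2)$ lie in a single $G$-orbit, whence $h'(y_1)\in G'\cdot h'(y_2)$ by \eqref{eq5.3.1}; as both lie in the one stratum $X'_j$ they have the same image under $p'_j$, i.e.\ $\psi(y_1)=\psi(y_2)$. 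Thus $\psi|_{C_i}$ factors set-theoretically through $p_i\circ h|_{C_i}$, and since $h(C_i)\subseteq U\cap X_i$ I can conclude $d'=\dim\psi(C_i)\leq \dim p_i(U\cap X_i)\leq G\text{-}\mathrm{mod}(U)$.

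The step I expect to be the main obstacle is turning that last set-theoretic factorization into a genuine inequality of dimensions, because a surjection of sets carries no dimension information by itself. The clean fix is to pass to the graph morphism $\Gamma:=(p_i\circ h,\psi)|_{C_i}$: by Chevalley's theorem its image $\Gamma(C_i)$ is constructible, the first projection restricts to an injective morphism onto $(p_i\circ h)(C_i)$ (so dimension is preserved), while the second projection maps $\Gamma(C_i)$ onto $\psi(C_i)$ (so dimension cannot increase), giving $\dim\psi(C_i)\leq\dim(p_i\circ h)(C_i)\leq\dim p_i(U\cap X_i)$. A secondary point that needs care is bookkeeping between the two definitions of modality and making sure the openness of $h'$ is invoked precisely once, exactly to see that $h'(h^{-1}(U))$ is an open neighbourhood of $h'(y)$.
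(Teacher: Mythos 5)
Your proof is correct, and it takes a genuinely different route from the paper: the paper gives no self-contained argument for Lemma \ref{lm5.3.1} at all, deducing it from Corollaries A.4, A.9 and A.10 of \cite{GN14} (with details in \cite[Prop.\ 3.2.4, Cor.\ 3.3.4, Cor.\ 3.3.6]{Ng13}), i.e.\ from general semicontinuity-type results on modality under morphisms established there, whereas you argue directly from the Rosenlicht-stratification definition of modality recalled in \S\ref{sec21}. All the key steps in your argument check out: the reduction to showing $G\text{-}\mathrm{mod}(U)\geq G'\text{-}\mathrm{mod}(h'(h^{-1}(U)))$ for every open neighbourhood $U$ of $h(y)$, with openness of $h'$ invoked exactly once to see that $h'(h^{-1}(U))$ is an open neighbourhood of $h'(y)$; the verification $h'(C)=B'$; the use of the geometric-quotient property (fibres of $p_i$, $p'_j$ are precisely the orbits in the respective strata) to convert hypothesis \eqref{eq5.3.1} into the set-theoretic factorization of $\psi$ through $p_i\circ h$ on each piece $C_i$; and the graph-plus-Chevalley device to turn that factorization into $\dim\psi(C_i)\leq\dim(p_i\circ h)(C_i)$ --- which is exactly the right repair for the difficulty you flag, since the first projection is injective on the constructible set $\Gamma(C_i)$, hence has finite fibres and preserves its dimension, while the second projection cannot raise dimension. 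What your route buys is a proof readable at the level of the definitions in \S\ref{sec21}; what the paper's route buys is brevity and reuse of machinery from \cite{GN14}, where the statement sits in a more general framework that also serves the other lower-bound theorems. One caveat worth recording: your argument for the second inequality $G'\text{-}\mathrm{mod}(h'(y))\geq\dim X'-\dim G'$ needs the dense open stratum $X'_1$ to meet every neighbourhood of $h'(y)$ in a set of dimension $\dim X'$, which holds when $X'$ is irreducible (as in the standard Rosenlicht construction, and in the paper's application in Lemma \ref{lm5.3.2}, where $X'\cong\mathbb{A}^{4(l+2)}$), but can fail at points of a lower-dimensional component of a reducible $X'$; this hypothesis is implicit in the paper's statement as well, so it is a limitation of the statement rather than of your proof.
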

\begin{lemma}\label{lm5.3.2}
Let $f\in K[[x,y]]$ with $\mathrm{char}(K)>3$. Then $\mathrm{rmod}(f)\geq 2+l$ with $l\geq 0$, if either
\begin{itemize}
\item[(i)] $f\in \langle x,y^{3+l}\rangle^3$; or
\item[(ii)] $f\in \langle x^2,y^{3+l}\rangle^2$.
\end{itemize}
\end{lemma}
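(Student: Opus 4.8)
The plan is to deduce both cases from Lemma \ref{lm5.3.1} by comparing the right action with the action of the group of quasihomogeneous coordinate changes on the space of leading weighted jets. I would fix the weight $w=(m,1)$ with $m=3+l$ in case (i) and $w=(m,2)$ in case (ii), and write $\mathfrak m_w^{d}=\{g\in K[[x,y]]:\mathrm{ord}_w g\ge d\}$ for the associated weighted filtration. One checks at once that $\langle x,y^{m}\rangle=\mathfrak m_w^{m}$ in case (i), so that $f\in\langle x,y^{m}\rangle^3$ means exactly $\mathrm{ord}_w f\ge 3m$; in case (ii) one has $f\in\langle x^2,y^{m}\rangle^2\subseteq\mathfrak m_w^{4m}$. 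Since the statement concerns right modality, I would first pass to $k$-jets for $k$ right sufficiently large (as in \S\ref{sec21}), so that $G=\mathcal R_k$ acts on $X=J_k$ and $\mathrm{rmod}(f)=G\text{-}\mathrm{mod}(j^kf)$.

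To apply Lemma \ref{lm5.3.1}, I would take $Y$ to be the affine space of $k$-jets of series lying in the relevant ideal, $h\colon Y\hookrightarrow X=J_k$ the inclusion with $G=\mathcal R_k$, and let $X'$ be the space of weighted $w$-jets in the window of degrees $[3m,4m-2]$ (resp. $[4m,6m-4]$ in case (ii)), with $h'\colon Y\to X'$ the truncation to that window; $h'$ is a surjective linear map, hence open. The group $G'$ is the image in $\mathrm{Aut}(X')$ of the subgroup of $\mathcal R$ preserving the $w$-filtration, that is, of the pro-algebraic group generated by the quasihomogeneous changes $x\mapsto\alpha x+\beta y^{m},\ y\mapsto\gamma y$ (of $w$-degree $0$) together with the $w$-degree-raising part. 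The heart of the matter is the hypothesis \eqref{eq5.3.1}: if $f,f'\in Y$ are right equivalent, their $w$-leading jets must be $G'$-equivalent. This holds because any coordinate change carrying one isolated singularity of $w$-order $\ge 3m$ to another can be chosen to preserve the $w$-filtration, so that its effect on the leading window is realized inside $G'$; this is precisely the type of statement supplied by the corollaries of \cite{GN14} underlying Lemma \ref{lm5.3.1}. \textbf{This filtration-preservation step is the main obstacle}, since a priori a right equivalence may involve coordinate changes of negative $w$-degree that distort the leading form.

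Granting \eqref{eq5.3.1}, Lemma \ref{lm5.3.1} gives $\mathrm{rmod}(f)=G\text{-}\mathrm{mod}(h(y))\ge \dim X'-\dim G'$, and it remains to evaluate this difference by the standard reduction to a monomial (lattice-point) count in the exponent plane. In case (i) the quasihomogeneous part of $G'$ normalizes the leading binary cubic $ax^3+bx^2y^{m}+cxy^{2m}+dy^{3m}$ to a form carrying a single modulus — here $\mathrm{char}\,K\neq 3$ is used — while the $w$-degree-raising part removes all remaining window monomials except the coefficients of $xy^{2m+1},\dots,xy^{3m-2}$, giving $1+(m-2)=m-1=2+l$ independent moduli, i.e. $\dim X'-\dim G'=2+l$. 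Case (ii) is entirely analogous, the binary \emph{quadratic} $ax^4+bx^2y^{m}+cy^{2m}$ (using $\mathrm{char}\,K\neq 2$) contributing one modulus and the intermediate monomials $x^2y^{m+1},\dots,x^2y^{2m-2}$ contributing $m-2$, again totalling $2+l$. Carrying out this bookkeeping, together with the verification of \eqref{eq5.3.1}, completes the proof.
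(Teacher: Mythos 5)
Your strategy is in substance the paper's own proof: the paper applies Lemma \ref{lm5.3.1} to exactly this comparison, taking $X$ to be the $4(l+2)$-dimensional space of coefficients of the monomials of $w$-degree $3m,\dots,3m+l+1=4m-2$ (its set $\Delta$), and $G=X_1\times X_2$ the $3(l+2)$-dimensional truncation of the filtration-preserving substitutions $x\mapsto a_{10}x+\dots$, $y\mapsto b_{01}y+\dots$; the bound is then literally $\dim X-\dim G=4(l+2)-3(l+2)=2+l$. One difference: your final ``bookkeeping'' is not what the lemma asks for. Lemma \ref{lm5.3.1} already outputs the plain dimension difference $\dim X'-\dim G'$, so normalizing the leading binary cubic to a one-modulus form and tracking which window monomials the degree-raising part can remove is superfluous (it amounts to a genericity analysis of orbits that the lemma is designed to avoid, and it is where your gratuitous use of $\mathrm{char}\,K\neq 3$ enters). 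As for the verification of \eqref{eq5.3.1}, which you rightly flag as the main obstacle: the paper does not argue that an equivalence between elements of $Y$ can be \emph{replaced} by a filtration-preserving one; it verifies the weaker computational statement that for $g\in Y$ the $\Delta$-coefficients of $\Phi(g)$ depend only on $\pi(g)$ and on the truncation $\bar\pi(\Phi)$, and it, too, leaves this as ``a simple calculation'' --- so your level of detail there matches the source, though your formulation asks for more than is needed.

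Case (ii) as you wrote it contains a genuine error. With $w=(m,2)$ the containment $\langle x^2,y^m\rangle^2\subseteq\mathfrak m_w^{4m}$ is \emph{strict}, so your truncation $h'\colon Y\to X'$ onto the full window $[4m,6m-4]$ is not surjective, hence not open, and Lemma \ref{lm5.3.1} does not apply as stated. Concretely, for $l=0$ (so $m=3$) the monomial $xy^5$ has $w$-order $13\in[12,14]$ but does not lie in $\langle x^4,x^2y^3,y^6\rangle$; in general every $xy^j$ with $\lceil 3m/2\rceil\le j\le 2m-1$ is a window monomial outside the ideal. The repair is to take $X'$ to be the span of only those window monomials that lie in $\langle x^2,y^m\rangle^2$ --- equivalently, to choose a monomial set $\Delta$ directly, as the paper does, rather than a full weighted jet level --- then check the truncated group still acts and recount dimensions; the count still gives $2+l$, but ``entirely analogous'' conceals exactly this adjustment. (The paper also proves only (i) in detail, but its monomial-by-monomial setup transfers to (ii) without the surjectivity problem.)
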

\begin{proof}
We prove only for (i) since the proof for (ii) is similar. Let $k$ be sufficiently large for $f$, i.e. $\mathrm{rmod}(f)=\mathcal{R}_k\text{-}\mathrm{mod}(f)$. We denote
$$\Delta:=\{(3;s),(2;3+s),(1;6+s),(0;9+s)\ |\ 0\leq s\leq l+1\}\subset \mathbb N^2,$$  
$$\Delta_1:=\{(1;s),(0;3+l+s)\ |\ 0\leq s\leq l+1\} \text{ and }\Delta_2:=\{(0;1+s)\ |\ 0\leq s\leq l+1\}\}\subset \mathbb N^2$$
and define
$$X:=\{\sum_{(i,j)\in \Delta} a_{i,j}x^iy^j\in K[[x,y]]\ |\ a_{i,j}\in K\}\cong \mathbb A^{4(l+2)},$$
$$G:=X_1\times X_2\cong \mathbb A^{3(l+2)},$$
where
$$X_1:=\{\sum_{(i,j)\in \Delta_1} a_{i,j}x^iy^j\in K[[x,y]]\ |\ a_{i,j}\in K,a_{10}\neq 0\},$$
$$ X_2:=\{\sum_{(i,j)\in \Delta_2} b_{i,j}x^iy^j\in K[[x,y]]\ |\ b_{i,j}\in K,b_{01}\neq 0\}.$$
Using the projections
$$\pi_1\colon J_k\to X_1, \sum_{(i,j)} a_{i,j}x^iy^j\mapsto \sum_{(i,j)\in \Delta_1} a_{i,j}x^iy^j,$$
$$\pi_2\colon J_k\to X_2, \sum_{(i,j)} a_{i,j}x^iy^j\mapsto \sum_{(i,j)\in \Delta_2} a_{i,j}x^iy^j,$$
$$\pi\colon J_k\to X, \sum_{(i,j)} a_{i,j}x^iy^j\mapsto \sum_{(i,j)\in \Delta} a_{i,j}x^iy^j$$
and
\begin{eqnarray*}
\bar\pi\colon \mathcal R_k&\to& G=X_1\times X_2\\
\Phi=(\Phi_1,\Phi_2)&\mapsto& \big(\pi_1(\Phi_1),\pi_2(\Phi_2)\big)
\end{eqnarray*}
we may define a multiplication on $G$, resp. an action map of $G$ on $X$ as follows
\begin{eqnarray*} 
\bullet\colon G\times G &\to& G\\
(\phi,\phi')&\mapsto & \bar\pi(\phi\circ \phi'),
\end{eqnarray*}
resp.
\begin{eqnarray*}
G\times X &\to& X\\
(\phi,g)&\mapsto & \pi\left(\phi(g)\right).
\end{eqnarray*}
By a simple calculation we can verify that the morphisms
$\iota\colon Y:=\langle x,y^{3+l}\rangle^3/\mathfrak{m}^{k+1}\hookrightarrow J_k$ and $\pi\colon Y\to X$ satisfy 
$$\iota^{-1}(\mathcal{R}_k\cdot \iota(g))\subset \pi^{-1}(G\cdot \pi (g)), \forall g\in Y.$$
Hence applying Lemma \ref{lm5.3.1} we obtain that
$$\mathrm{rmod}(f)=\mathcal{R}_k\text{-}\mathrm{mod}(\iota(f))\geq \dim X-\dim G=2+l.$$
\end{proof}
\subsection{Computing the modality of $E_{12}$}\label{sec5.5}
We shall show that $E_{12}$ is a class of unimodal singularities. To compute the modality of a singularity we use the general argument in \cite{GN14}, in particular, the following lemma.
\begin{lemma}\label{lm5.5.1}
Assume that $f\in K[[{\bf x}]]$ deforms only into finitely many families $h^{(i)}_{t}({\bf x})$ over varieties $T^{(i)},i\in I$. Then
$$\mathrm{rmod}(f)\leq \max_{i\in I} \dim T^{(i)}.$$
Assume further that the families $h^{(i)}_{t}({\bf x})$ are all modular. Then 
$$\mathrm{rmod}(f)= \max_{i\in I} \dim T^{(i)}.$$
\end{lemma}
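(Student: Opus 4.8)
The plan is to derive both inequalities from the completeness of the semiuniversal unfolding, together with the elementary observation that a \emph{modular} family maps finite-to-one onto its set of right-classes, so that its base dimension is not lost under the classifying map.

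First I would fix $k$ right sufficiently large for $f$ and pass to the semiuniversal unfolding $f_\lambda$ of $f$ over $\mathbb{A}^N,0$ with $N=\mu+n-1$. Since this unfolding is right complete, \cite[Prop. 2.12(ii)]{GN14} allows me to replace $\mathrm{rmod}(f)$ by the modality of $f$ with respect to $f_\lambda$, i.e.\ by the $\mathcal R_k$-modality of $j^kf$ computed on a Zariski neighbourhood $W$ of $0\in\mathbb{A}^N$ via a Rosenlicht stratification of $W$ by right-equivalence classes. The hypothesis that $f$ deforms only into the families $h^{(i)}_t$, $i\in I$, means precisely that after shrinking $W$ every fibre $f_\lambda$, $\lambda\in W$, is right equivalent to some $h^{(i)}_{t}$, $t\in T^{(i)}$. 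Thus on $W$ the partition into right-equivalence classes is covered by the images of the finitely many classifying maps $c_i\colon T^{(i)}\to(\text{right-classes})$, $t\mapsto[h^{(i)}_t]$.

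For the upper bound I would note that the modality is the maximal dimension of the image of a stratum in the quotient, and that on the specific neighbourhood $W$ every right-class met is contained in some $c_i(T^{(i)})$. Since $\dim c_i(T^{(i)})\le\dim T^{(i)}$ for each $i$ and $I$ is finite, the image of the Rosenlicht stratification of $W$ has dimension at most $\max_i\dim T^{(i)}$; passing to the minimum over neighbourhoods in the definition only decreases this, so $\mathrm{rmod}(f)\le\max_i\dim T^{(i)}$. For the lower bound I would invoke modularity: if $h^{(i)}$ is modular, each fibre of $c_i$ is finite, whence $c_i$ is finite-to-one and $\dim c_i(T^{(i)})=\dim T^{(i)}$. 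Choosing $i_0$ with $\dim T^{(i_0)}$ maximal and using that $f$ deforms into the family $h^{(i_0)}$, the stratum corresponding to $h^{(i_0)}$ meets every neighbourhood of $j^kf$ and there has image of dimension $\dim T^{(i_0)}$. Taking the minimum over neighbourhoods then gives $\mathrm{rmod}(f)\ge\dim T^{(i_0)}=\max_i\dim T^{(i)}$, which together with the upper bound yields the asserted equality.

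The main obstacle, and the step I would treat most carefully, is the lower bound: I must verify that the top-dimensional modular family $h^{(i_0)}$ is genuinely realised as a $\dim T^{(i_0)}$-parameter family of pairwise non-right-equivalent singularities in \emph{every} neighbourhood of $j^kf$, so that taking the minimum over neighbourhoods in the Rosenlicht-stratification definition does not drop the dimension. This is exactly where finiteness of the fibres of $c_{i_0}$ (modularity) is indispensable, since without it $c_{i_0}(T^{(i_0)})$ could collapse to lower dimension, and where one needs that $f$ is adjacent from $h^{(i_0)}$ so that the corresponding stratum accumulates at $j^kf$. Translating ``$f$ deforms into $h^{(i_0)}$'' into the statement that this stratum meets arbitrarily small neighbourhoods of $j^kf$, and that the projection onto its set of right-classes is the expected geometric quotient of dimension $\dim T^{(i_0)}$, is the technical heart; the remaining bookkeeping with the stratification of $W$ is routine.
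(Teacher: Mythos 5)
Your reduction via the semiuniversal unfolding, \cite[Prop.\ 2.12(ii)]{GN14}, and the Rosenlicht stratification is the right machinery (the paper's own ``proof'' is just a pointer to \cite{GN14}, and this is the theory it means), and your upper bound is sound: it is essentially the operational definition of modality quoted in Section~2.1. The genuine gap is in the lower bound, at exactly the step you flag but never close. You assert that in \emph{every} neighbourhood $U$ of $j^kf$ the stratum belonging to the top family ``has image of dimension $\dim T^{(i_0)}$''. Adjacency (``$f$ deforms into $h^{(i_0)}$'') only provides an unfolding $f_s$ of $f$ whose generic fibres lie in classes of the family; letting $s\to s_0$ puts jets of \emph{some} members $h^{(i_0)}_t$ into every $U$, but it gives no control over \emph{which} parameters $t$ occur: the classes realised inside a small $U$ could a priori sweep out only a lower-dimensional subset of $c_{i_0}(T^{(i_0)})$, in which case the $\min$ over neighbourhoods would drop below $\dim T^{(i_0)}$. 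Modularity (finite fibres of $c_{i_0}$) tells you the \emph{whole} family carries a $\dim T^{(i_0)}$-dimensional set of classes; it says nothing about the portion of that set accumulating at $j^kf$. So as written the equality does not follow. (A smaller point of the same kind: the hypothesis ``deforms \emph{only} into'' is literally just an upper restriction and does not assert that the maximal family is actually reached; for the equality one must read the list as exact, which you assume silently.)

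The missing idea is to avoid counting classes near $j^kf$ altogether and instead localise the dimension count at a member of the family, then transfer back by semicontinuity. For a member $g=h^{(i_0)}_t$, continuity of $t'\mapsto j^kh^{(i_0)}_{t'}$ places jets of $h^{(i_0)}_{t'}$ for all $t'$ near $t$ into every neighbourhood of $j^kg$, and by modularity these represent a $\dim T^{(i_0)}$-dimensional set of right classes \emph{there}; hence $\mathrm{rmod}(g)\geq \dim T^{(i_0)}$ --- this is precisely the remark in the introduction that any modular family $F({\bf x},t)$ satisfies $\mathrm{mod}(F({\bf x},t))\geq \dim T$ for all $t$. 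Now use that $G\text{-}\mathrm{mod}$ is upper semicontinuous, which is immediate from the min-over-neighbourhoods definition: if $U_0$ attains the minimum for $x$, then every $y\in U_0$ satisfies $G\text{-}\mathrm{mod}(y)\leq G\text{-}\mathrm{mod}(U_0)=G\text{-}\mathrm{mod}(x)$. Since adjacency makes jets of members of $h^{(i_0)}$ accumulate at $j^kf$, some such member $g$ lies in $U_0$, and therefore $\mathrm{rmod}(f)\geq \mathrm{rmod}(g)\geq \dim T^{(i_0)}$. With this substitution for your final step, the argument closes; without it, the claim that the dimension survives the minimum over neighbourhoods is unsupported.
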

\begin{proof}
cf. \cite{GN14}
\end{proof}
\begin{proof}[Proof for $E_{12}$]
Assume that $f=x^3+y^7+axy^5\in K[[x,y]]$ with $p=\mathrm{char}(K)>7$ and $a\in K$, is of type $E_{12}$. We will show that 
$$\mathrm{rmod}(f)=1.$$ 
In fact, by Theorem \ref{thm35} (or, Theorems {\bf 1}--{\bf 9}, {\bf 26}, {\bf 27}, {\bf 28}), $f$ deforms only into the following modular families
$$E_{12},\ A_{k} (k\leq 6), D_{k} (k\leq 8), E_{6}, E_{7}, E_{8}, J_{2,0}, J_{2,1}.$$
Hence it follows from Lemma \ref{lm5.5.1} that $f$ is right unimodal singularities.
\end{proof}

\end{document}